\numberwithin{equation}{section}
\newtheorem{theorem}{Theorem}[section]
\newtheorem{lemma}[theorem]{Lemma}
\newtheorem{conjecture}[theorem]{Conjecture}
\theoremstyle{definition} 
\newtheorem*{example}{Example}
\DeclareMathOperator{\Sym}{Sym}
\definecolor{myblue}{HTML}{007ACC}
\title[An equivalence]{An equivalence between a conjecture of Neumann-Praeger on Kronecker classes and a conjecture on cliques of derangement graphs}
\author{Jessica Anzanello}
\address{Jessica Anzanello\\
Dipartimento di Matematica e Applicazioni\\ University of Milano-Bicocca\\
Piazza del Calendario 3, 20125 \\Milano, Italy}
\email{j.anzanello@campus.unimib.it}
\author{Pablo Spiga}
\address{Pablo Spiga\\
Dipartimento di Matematica e Applicazioni\\ University of Milano-Bicocca\\
Piazza del Calendario 3, 20125 \\Milano, Italy}
\email{pablo.spiga@unimib.it}
\begin{document}

\begin{abstract}
We prove an equivalence between a conjecture of Neumann and Praeger on Kronecker classes in algebraic number fields, and a conjecture on cliques of derangement graphs in combinatorics.
\end{abstract}

\subjclass[2020]{Primary 05C35; Secondary 05C69, 20B05}
\keywords{derangement graph, Kronecker classes, clique}        

\maketitle

\section{Introduction}
\subsection{Derangement graphs}\label{der graphs}
Let $G$ be a group acting on a finite set $\Omega$. An element $g \in G$ is called a \emph{derangement} if it has no fixed points on $\Omega$.
If $G$ is transitive, and $|\Omega|\ge2$, a classical theorem of Jordan \cite{Jordan} guarantees the existence of a derangement.
This elementary fact has motivated a wide range of problems and applications across various areas of mathematics, ranging from number theory to topology; see, for example, Serre's account \cite{Serre}; and in recent years the study of derangements has become a central topic in permutation group theory; see  the introductory chapter of \cite{BurGiu} for a survey.

In this paper we focus on a combinatorial object naturally associated with $G$, namely, the \emph{derangement graph} $\Gamma_G$.
This is the Cayley graph of $G$ with connection set equal to the set $\mathcal{D}(G)$ of all derangements of $G$: the vertex set is $G$, and two vertices $x,y \in G$ are adjacent if and only if $xy^{-1}$ is a derangement. Note that $\Gamma_G$ is loopless, since $\mathcal{D}(G)$ does not contain the identity element of $G$, and it is a simple graph because  $\mathcal{D}(G)$  is inverse-closed, that is, $\mathcal{D}(G)=\{d^{-1}\mid d \in \mathcal{D}(G)\}$. Moreover, in many natural situations one has $G=\langle \mathcal{D}(G) \rangle$, and therefore $\Gamma_{G}$ is connected. For instance, if $G$ is simple, then $\langle \mathcal{D}(G) \rangle$ is a non-trivial normal subgroup of $G$, and therefore equals $G$. Moreover,
Burness and Fusari \cite{BurFus}  recently proved that a finite simple transitive group can be generated by two conjugate derangements, and precise upper bounds for the index $[G:\langle \mathcal{D}(G) \rangle]$ in transitive permutation groups
have been investigated in \cite{BaiCa, Garzoni}.
Here, we are interested in the information encoded by cliques with respect to the action of $G$. Before discussing this, we remark some facts about the complementary notion of cocliques. Cliques and cocliques in $\Gamma_G$ are related by the so-called clique-coclique bound (see~\cite[Theorem~2.1.1]{GodsilMeagher}):
denoting by $\omega(\Gamma_G)$ and $\alpha(\Gamma_G)$ the clique number and the coclique (or independence) number of $\Gamma_G$, that is, the maximum cardinality of a clique and of a coclique in 
$\Gamma_G$, we have $\alpha(\Gamma_G)\omega(\Gamma_G)\le |G|$. Cocliques, also known as \textit{intersecting sets}, are closely related to the classical notion of intersecting families in extremal combinatorics, whose study culminated in the Erd\H{o}s--Ko--Rado theorem, and whose extensions to permutation groups have been studied for example in \cite{6,10, li2020ekr,KRS}. 

As an elementary fact, if $G$ is transitive and $|\Omega| \geq 2$, then Jordan’s theorem guarantees the existence of a derangement, so $\Gamma_G$ contains a clique of size $2$.
Moreover, it was shown in \cite[Theorem~1.5]{KRS} that if $|\Omega| \geq 3$, then $\Gamma_G$ contains a triangle.
These results suggest that the absence of large cliques in $\Gamma_G$ strongly restricts the possible degree of the action. Motivated by this phenomenon, Meagher, Razafimahatratra and the second author \cite[Question~6.1]{KRS} formulated the following conjecture.

\begin{conjecture}\label{conjecture1}
There exists a function $F$ such that, if $G$ is a transitive permutation group of degree $n$ whose derangement graph contains no clique of size $c$, then $n~\leq~F(c)$.
\end{conjecture}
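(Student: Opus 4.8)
\noindent I would prove \Cref{conjecture1} by first reducing it to the case of primitive permutation groups, then settling most O'Nan--Scott types by producing a regular subgroup, and finally isolating the genuine difficulty.

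\emph{Step 1: reduction to primitive groups.} Let $G$ be transitive of degree $n$ with point stabiliser $H$, and recall that the overgroups $H\le M\le G$ parametrise the systems of imprimitivity of $G$ on $\Omega$: the action on the cosets $G/M$ is the quotient action on the block system whose blocks have size $[M:H]$. Since $H^g\le M^g$ for all $g\in G$, an element lying in no $G$-conjugate of $M$ lies in no $G$-conjugate of $H$; hence every derangement of $(G,G/M)$ is a derangement of $(G,G/H)$, so every clique of the derangement graph of $(G,G/M)$ is a clique of the derangement graph of $(G,G/H)$. Therefore, if $\Gamma_G$ (for the action on $\Omega=G/H$) has no clique of size $c$, neither has the derangement graph of any quotient action; taking $M$ maximal, this quotient is primitive. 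Granting the conjecture for primitive groups with bounding function $F_0$, this yields $[G:M]\le F_0(c)$, and it then remains to bound the block size $[M:H]$.

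\emph{Step 2: most primitive types.} Let $G$ now be primitive of degree $m$, and run through the O'Nan--Scott types. If $G$ is of affine type it contains the translation group $V$, which acts regularly, and the difference of two distinct translations is a nontrivial translation, hence a derangement; so $V$ is a clique of size $m$. The same mechanism handles the twisted wreath, (simple) diagonal and compound diagonal types: in each of these $G$ contains a subgroup --- the socle in the twisted wreath case, a full off-diagonal direct factor of the socle in the diagonal cases --- that acts regularly on $\Omega$, and any regular (indeed any semiregular) subgroup $K\le G$ is automatically a clique of $\Gamma_G$ of size $|K|$, since every nontrivial element of $K$ is a derangement. For the product action type $G\le H\wr S_k$ acting on $\Delta^k$, an element $(t_1,\dots,t_k)$ of the base group is a derangement exactly when some $t_i$ is a derangement of $H$ on $\Delta$; hence, if $A$ is a clique in the derangement graph of $T=\operatorname{soc}(H)$ acting on $\Delta$, then $A^k\subseteq T^k\le G$ is a clique in the derangement graph of $(G,\Delta^k)$, of size $|A|^k$. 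Since $m=|\Delta|^k$ and $|A|\ge 2$ by Jordan's theorem, this reduces the product action type to the almost simple type.

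\emph{Step 3: the obstacles.} Two things remain, and both are hard. The first is the almost simple case: one must show that if $G$ is almost simple and primitive of unbounded degree, then $\omega(\Gamma_G)$ is unbounded. The natural approach is once more a semiregular subgroup of growing order --- a Singer cycle for the natural and subspace actions of the classical groups, a long cycle (or a product of two equal-length cycles) for the alternating and symmetric groups on subsets, partitions and tuples, a maximal torus containing an element of primitive prime divisor order for the exceptional groups --- but making this uniform across all maximal subgroups, handling the arithmetic obstructions along the way (parity in $A_n$, common divisors in the Grassmannian actions, the small-rank exceptional configurations), needs a delicate analysis resting on the classification of finite simple groups, and no such analysis is presently available. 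The second is the block-size bound left open in Step~1: one must control how the union $\bigcup_{g\in G}H^g$ sits inside a maximal overgroup $M$, i.e.\ understand the covering of $M$ by $G$-conjugates of $H$ --- and this is exactly a Kronecker-class phenomenon of the kind studied by Neumann and Praeger. I expect this second point to be the true bottleneck; establishing it, together with the almost simple case and Steps~1--2, would prove \Cref{conjecture1}.
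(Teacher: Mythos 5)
There is a genuine gap, and it is worth being precise about where. First, a point of orientation: the statement you are proving is an open conjecture, and the paper does not prove it either — it proves that Conjecture~\ref{conjecture1} is \emph{equivalent} to the Neumann--Praeger conjecture (Conjecture~\ref{conjecturePraeger0}), together with the unconditional partial result Theorem~\ref{thrm:1}. Your proposal correctly senses this: the ``covering of $M$ by $G$-conjugates of $H$'' that you flag as the bottleneck is exactly the Kronecker-class input, so your argument cannot close without assuming (a form of) Conjecture~\ref{conjecturePraeger0}. On the other hand, the first obstacle you list — the almost simple, and more generally the primitive, case — is not open: it is \cite[Theorem~1.1]{FPS} (Theorem~\ref{thrm:FPS} in the paper), proved there for all innately transitive groups. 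So your Step~2 and the first half of Step~3 reprove or defer to something already in the literature, while the part you treat as a residual bookkeeping step is where all the difficulty lives.

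The concrete gap is in the sentence ``it then remains to bound the block size $[M:H]$.'' Bounding $[M:H]$ means analysing $M$ (or the kernel $K$ of the action on the block system) acting on a block $\Delta$, and the hypothesis that $\Gamma_G$ on $\Omega$ has no clique of size $c$ is \emph{not} inherited by that action: an element of $K$ that is a derangement on $\Delta$ may well fix points of $\Omega$ in other blocks, so cliques of the block action do not lift, and a naive induction on degree breaks. The paper's route around this is the substance of Section~3: either $K$ has a derangement on a finer block system — in which case one multiplies such derangements from the different levels of the chain to assemble a genuine clique of $\Gamma_G$ of size $2^{\kappa-1}$, bounding the number of levels where this happens by $\log_2(c)+1$ — or it has none, in which case $\bigcup_g K^g=\bigcup_g(K\cap G_{\{\Delta\}})^g$ and Conjecture~\ref{conjecturePraeger0} bounds the index jump. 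Even then a single application is not enough: one must control the length $\ell$ of the entire normal imprimitivity series, which is what the recursion \eqref{functions} and Theorem~\ref{thrm:1} (itself a substantial CFSG-based result) are for. None of this machinery is present or replaceable by your Steps~1--2, so the proposal, as written, establishes neither the conjecture nor the conditional implication.
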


For small values of $c$, the conjecture is known to hold.
As discussed above, when $c = 2$, Jordan’s theorem implies $n \leq 1$, and when $c = 3$, \cite[Theorem~1.5]{KRS} gives $n \leq 2$.
Moreover, for $c = 4$, the main theorem of \cite{CLS} shows that $n \leq 30$, and that this bound is best possible.
More generally, Conjecture~\ref{conjecture1} was proved in \cite{FPS} for primitive groups and, in fact, for the wider class of innately transitive groups, that is groups $G\le \Sym(\Omega)$ admitting a minimal normal subgroup acting transitively on $\Omega$. 
The main result of this paper is a surprising equivalence between Conjecture~\ref{conjecture1} and a long-standing number-theoretic conjecture on Kronecker classes, which we discuss in the next section. In addition, in order to prove this equivalence, we establish a weaker version of Conjecture~\ref{conjecture1}, in which the bound on $n$ is allowed to depend also on an additional parameter $\ell$ (see Theorem~\ref{thrm:1}). 
We conclude this section with a remark. Traditionally, the derangement graph is defined only for faithful actions, so that 
$G$ is a permutation group on 
$\Omega$. In our setting, however, it is convenient to allow arbitrary group actions. This added flexibility is essential for our inductive arguments, where one frequently encounters a transitive group acting on a system of imprimitivity, and needs to lift cliques from the derangement graph associated with the induced action to the derangement graph corresponding to the action on $\Omega$.

\subsection{Normal coverings and Kronecker classes}\label{coveringsKronecker}Let $k$ be an algebraic number field, and let $K/k$ be a finite extension. Since the late nineteenth century, beginning with the work of Kronecker \cite{Kronecker}, a central theme in algebraic number theory has been the problem of characterising finite extensions $K/k$ of algebraic number fields in terms of the decomposition behaviour of the prime ideals of  $k$ in $K$. We briefly recall some basic facts (see \cite{Neukirch}, or any standard textbook in algebraic number theory, for background).
Let $\mathcal{O}_k$ and $\mathcal{O}_K$ denote the rings of algebraic integers of $k$ and $K$, respectively.
If $\mathfrak{p}$ is a prime ideal in $\mathcal{O}_k$, then its extension to $\mathcal{O}_K$ admits a unique factorisation
$$\mathfrak{p}\mathcal{O}_K=\prod_{i=1}^{r}\mathfrak{P}_i^{e_i},$$
where the $e_i$ are positive integers, and the $\mathfrak{P}_i$ are the distinct prime ideals of $K$ lying above $\mathfrak{p}$, that is, satisfying $\mathfrak{P}_i\cap \mathcal{O}_k=\mathfrak{p}.$
For each prime $\mathfrak{P}_i$ above $\mathfrak{p}$ one may consider the induced extension of residue fields $
\mathcal{O}_k/\mathfrak{p} \subseteq \mathcal{O}_K/\mathfrak{P}_i$,
whose degree
\[
f(\mathfrak{P}_i/\mathfrak{p}) = 
\bigl[\mathcal{O}_K/\mathfrak{P}_i : \mathcal{O}_k/\mathfrak{p}\bigr]
\]
is called the \emph{residue degree} of $\mathfrak{P}_i$ over $\mathfrak{p}$.

The \emph{\textbf{Kronecker set}} of $K$ over $k$ is the set of all prime ideals 
$\mathfrak{p} \subset \mathcal{O}_k$ for which there exists at least one prime ideal 
$\mathfrak{P}$ of $\mathcal{O}_K$ lying above $\mathfrak{p}$ such that 
$f(\mathfrak{P}/\mathfrak{p}) = 1$.
Kronecker \cite{Kronecker} proved that a Galois extension of prime degree is characterized by its Kronecker set, and in 1903 Bauer \cite{Bauer} showed that the same holds for all finite Galois extensions. However, in 1926 Gassmann \cite{Gassmann} produced an example of two nonconjugate extensions of $\mathbb{Q}$, each of degree 180, having the same Kronecker set. This example effectively halted progress in the area for several decades, until renewed interest emerged in the 1960s, when Jehne \cite{Jehne} introduced the concept of \textit{\textbf{Kronecker equivalence}}:
two finite extensions of $k$ are said to be Kronecker equivalent if their Kronecker sets differ only on finitely many primes. This defines an equivalence relation and partitions extensions into \textit{\textbf{Kronecker classes}}. Extensions belonging to the same class share strong arithmetic similarities.
The group-theoretic interpretation of Kronecker equivalence is described in~\cite{Jehne,Klingen1,Klingen2,Praeger}. Let $K$ and $K'$ be finite extensions of $k$, and let $M$ be a common Galois closure over $k$ containing both fields. Set $G=\mathrm{Gal}(M/k)$, $U=\mathrm{Gal}(M/K)$, and $U'=\mathrm{Gal}(M/K')$. Then $K$ and $K'$ are Kronecker equivalent if and only if
\begin{equation}\label{perlis}
\bigcup_{g\in G}U^g=\bigcup_{g\in G}(U')^g.
\end{equation}

This criterion makes the link with permutation group theory explicit. Indeed, viewing $G$ acting by right multiplication on the cosets of $U$ and $U'$, condition~\eqref{perlis} is equivalent to saying that the two corresponding permutation representations of $G$ have the same set of derangements.

Several problems on Kronecker equivalence have been tackled using finite group theory. In particular, examples of infinite Kronecker classes constructed by Jehne and further results discussed  by Praeger in ~\cite{Praeger2} suggest
that if $K/k$ is an extension of algebraic number fields of degree $n$ and $L$ is Kronecker equivalent to $K$, then $~[L:k]$ should be bounded by a function of $n$. In group-theoretic terms, this problem was formulated by Neumann and Praeger in 1988 in the following conjecture.

\begin{conjecture}[{{Neumann-Praeger, see~\cite{Praeger2}}}]\label{conjecturePraeger0}
There exists a function $f$ such that if $G$ is a finite group with subgroups $U$ and $U'$ satisfying $|G:U'|=n$ and
$$\bigcup_{g\in G}U^g=\bigcup_{g\in G}(U')^g,$$
then $|G:U|\le f(n)$.
\end{conjecture}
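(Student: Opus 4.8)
The plan is to derive Conjecture~\ref{conjecturePraeger0} from Conjecture~\ref{conjecture1}, which is already known for innately transitive groups by \cite{FPS}, and to attack the general case of Conjecture~\ref{conjecture1} by peeling off imprimitivity block systems, with Theorem~\ref{thrm:1} supplying the inductive step. For the first passage, suppose $U,U'\le G$ with $|G:U'|=n$ and $\bigcup_{g\in G}U^g=\bigcup_{g\in G}(U')^g$. Acting by right multiplication on $G/U$, an element $h\in G$ fixes no coset exactly when $h$ lies in no conjugate of $U$, so the derangement set of this action is $G\setminus\bigcup_{g\in G}U^g$; by hypothesis the degree-$|G:U|$ action on $G/U$ and the degree-$n$ action on $G/U'$ have the same derangement set, hence the same Cayley graph $\Gamma_G$. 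Now any clique $S$ of $\Gamma_G$ injects into $G/U'$ via $x\mapsto U'x$ (if $U'x=U'y$ then $xy^{-1}\in U'$ fixes the coset $U'$, so $x,y$ are non-adjacent), whence $\omega(\Gamma_G)\le n$ and $\Gamma_G$ has no clique of size $n+1$. Applying Conjecture~\ref{conjecture1} to the transitive action of $G$ on $G/U$ then gives $|G:U|\le F(n+1)$, so $f(n):=F(n+1)$ works, the case $|G:U|\le 1$ being trivial.

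It remains to prove Conjecture~\ref{conjecture1} itself, and here I would argue by induction on the degree. If $G\le\Sym(\Omega)$ is transitive of degree $n$ with $\Gamma_G$ free of $c$-cliques and $G$ is innately transitive, then $n\le F(c)$ by \cite{FPS}; so assume $G$ imprimitive and fix a nontrivial block system $\mathcal B$. An element of $G$ with no fixed point on $\Omega$ has no fixed block, so $\mathcal D(G\text{ on }\mathcal B)\subseteq\mathcal D(G\text{ on }\Omega)$ and $\Gamma_{G\text{ on }\mathcal B}$ is a spanning subgraph of $\Gamma_G$; hence it too has no $c$-clique, and by induction $|\mathcal B|$ is bounded in terms of $c$. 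Writing $N$ for the kernel of the action on $\mathcal B$ we get $|G:N|\le|\mathcal B|!$ bounded, and the transitive group $\overline{G_B}\le\Sym(B)$ induced on a block $B$ has degree $b=n/|\mathcal B|$ and a normal subgroup $\overline N$ of index dividing $|G:N|$. Since $b<n$, the induction would finish the proof provided one can show that $\Gamma_{\overline{G_B}\text{ on }B}$ contains no clique of size $c'$ for a suitable $c'=c'(c)$; equivalently, one must transfer clique-freeness from the global action on $\Omega$ down to the local action on a single block.

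I expect this transfer to be the main obstacle, and it is precisely where Theorem~\ref{thrm:1} should be brought to bear. The difficulty is that an element of $G_B$ which deranges $B$ need not derange $\Omega$ — it may fix points in the other $|\mathcal B|-1$ blocks — so a clique of $\Gamma_{\overline{G_B}\text{ on }B}$ does not lift verbatim to a clique of $\Gamma_G$; one has to correct it simultaneously on all blocks, and because $G$ permutes the blocks transitively this correction is entangled with the conjugation action of $G$ on $N$. The natural way to manage it is to work modulo $N$, inside the bounded quotient $G/N$, reducing the construction of a large global clique to a problem on the block action that carries a bounded complexity parameter (the number of blocks, or $|G:N|$); applying Theorem~\ref{thrm:1} with $\ell$ equal to that parameter then bounds $b$ in terms of $c$ and $\ell$, hence in terms of $c$ alone. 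Making this last reduction effective is exactly the point at which Conjecture~\ref{conjecture1} — and with it Conjecture~\ref{conjecturePraeger0} — has so far resisted a complete proof, and it is the role of the auxiliary parameter $\ell$ in Theorem~\ref{thrm:1} to let the induction close once the reduction is carried out.
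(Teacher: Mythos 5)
There is a fundamental problem: the statement you are asked to prove is an open conjecture, and the paper does not prove it either. The paper's contribution (Theorem~\ref{thrm:main}) is only an \emph{equivalence} between Conjecture~\ref{conjecturePraeger0} and Conjecture~\ref{conjecture1}, together with the intermediate Theorem~\ref{thrm:1}. Your first paragraph correctly reproduces the easy direction of that equivalence — it is essentially verbatim the argument in Section~3 of the paper: the union of conjugates of $U$ is the set of non-derangements of $G$ on $G/U$, a clique of size greater than $n$ would contain two elements in a common coset of $U'$, whose quotient lies in $U'$ and hence in a conjugate of $U$ and so fixes a point, giving $\omega(\Gamma_G)\le n$ and then $|G:U|\le F(n+1)$. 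That part is sound, but it only reduces Conjecture~\ref{conjecturePraeger0} to Conjecture~\ref{conjecture1}, which is itself unproved.

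Your attempt to then prove Conjecture~\ref{conjecture1} does not close, and you acknowledge as much in your final sentence. The gap is not merely technical: the parameter $\ell$ in Theorem~\ref{thrm:1} is the maximal length of a normal imprimitivity series, and nothing in your induction bounds $\ell$ in terms of $c$. In the paper, the only mechanism for bounding $\ell$ by a function of $c$ is an application of Conjecture~\ref{conjecturePraeger0} itself (via the inequality~\eqref{aux} in Section~3, which bounds the index jump $i_j-i_{j-1}$ using the function $f$ from the conjecture). So the route you sketch is circular: proving Conjecture~\ref{conjecturePraeger0} via Conjecture~\ref{conjecture1} via Theorem~\ref{thrm:1} requires controlling $\ell$, and controlling $\ell$ is exactly where the paper invokes Conjecture~\ref{conjecturePraeger0}. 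A smaller point: your justification for $\mathcal D(G\text{ on }\mathcal B)\subseteq\mathcal D(G\text{ on }\Omega)$ is stated backwards — a derangement on $\Omega$ may well fix a block setwise; the correct direction is that an element fixing no block fixes no point. The containment you use is right, but for the reason opposite to the one you give.
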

See also the Kourovka Notebook~\cite[11.71]{notebook}.

\subsection{Our main results}
The principal result of this paper reveals an unexpected connection between Conjectures~\ref{conjecture1} and~\ref{conjecturePraeger0}.

\begin{theorem}\label{thrm:main}
Conjecture~$\ref{conjecture1}$ holds if and only if Conjecture~$\ref{conjecturePraeger0}$ holds.
\end{theorem}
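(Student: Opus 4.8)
The plan is to prove the two implications of Theorem~\ref{thrm:main} separately. The implication ``Conjecture~\ref{conjecture1} $\Rightarrow$ Conjecture~\ref{conjecturePraeger0}'' is short; the converse is the substantial one and is where Theorem~\ref{thrm:1} is used. Throughout I would work, consistently with the convention adopted above, with derangement graphs of arbitrary (not necessarily faithful) transitive actions.

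First I would set up the dictionary between the two settings. For a transitive action of $G$ on a set $\Omega\cong G/U$, the derangement graph is the Cayley graph $\Gamma_G=\mathrm{Cay}(G,\mathcal{D}(G))$, and hence it depends only on the set $\mathcal{D}(G)=G\smallsetminus\bigcup_{g\in G}U^g$ of derangements of the action on $G/U$; consequently the hypothesis $\bigcup_{g\in G}U^g=\bigcup_{g\in G}(U')^g$ of Conjecture~\ref{conjecturePraeger0} says exactly that the actions of $G$ on $G/U$ and on $G/U'$ have the same set of derangements, and therefore literally the same graph $\Gamma_G$. Moreover, whenever $G$ acts transitively of degree $n$ one has $\omega(\Gamma_G)\le n$: the images $\omega_0 g_1,\dots,\omega_0 g_t$ of a fixed point $\omega_0$ under the elements of a clique $\{g_1,\dots,g_t\}$ of $\Gamma_G$ are pairwise distinct, since $\omega_0 g_i=\omega_0 g_j$ with $i\ne j$ would force $g_i g_j^{-1}$ to fix $\omega_0$, contradicting that it is a derangement. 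Granting this, the implication ``Conjecture~\ref{conjecture1} $\Rightarrow$ Conjecture~\ref{conjecturePraeger0}'' is immediate: assuming Conjecture~\ref{conjecture1} with function $F$, and given $G,U,U'$ as in Conjecture~\ref{conjecturePraeger0} with $|G:U'|=n$, I regard $\Gamma_G$ as the derangement graph of the transitive action of $G$ on $G/U$, which has degree $|G:U|$; since $\Gamma_G$ is also the derangement graph of the degree-$n$ action on $G/U'$, it contains no clique of size $n+1$, whence Conjecture~\ref{conjecture1} gives $|G:U|\le F(n+1)$. Thus $f(n):=F(n+1)$ has the required property.

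For the converse I would assume Conjecture~\ref{conjecturePraeger0} with function $f$, fix a transitive group $G$ of degree $n$ whose derangement graph contains no clique of size $c$, and let $U'$ be a point stabiliser, so $|G:U'|=n$. The strategy is to produce a subgroup $U\le G$ with $\bigcup_{g\in G}U^g=\bigcup_{g\in G}(U')^g$ and with $|G:U|$ bounded by a function of $c$ alone; then Conjecture~\ref{conjecturePraeger0}, applied with $U'$ in the role of its ``$U$'' and $U$ in the role of its ``$U'$'', yields $n=|G:U'|\le f(|G:U|)\le F(c)$ for an appropriate $F$. To locate such a $U$ I would use the standard correspondence between overgroups $W\ge U'$ and block systems of the action on $\Omega=G/U'$: one checks that $\bigcup_{g\in G}W^g=\bigcup_{g\in G}(U')^g$ holds precisely when $W$, viewed as a subgroup of $G$, contains no derangement of the action on $\Omega$, and that the block system attached to $W$ then consists of exactly $|G:W|$ blocks. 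Hence the task is to find a block system of $\Omega$ whose block stabiliser contains no derangement and whose number of blocks is bounded in terms of $c$. For primitive $G$ --- and, by~\cite{FPS}, even for the wider class of innately transitive groups --- the degree $n$ is already bounded in terms of $c$, so there is nothing to do; the real difficulty is the general imprimitive case, where one must descend through a chain of block systems, and the obstruction is that $\omega(\Gamma_G)$ controls the derangement graph of the action on a top quotient but is blind to what happens inside a single block, so that a recursion on block stabilisers does not close by itself.

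This is precisely where Theorem~\ref{thrm:1} --- the variant of Conjecture~\ref{conjecture1} whose bound on $n$ is allowed to depend on an auxiliary parameter $\ell$ --- is designed to carry the induction through, with the assumed Conjecture~\ref{conjecturePraeger0} then used to bound $\ell$ in terms of $c$ and so discharge it. I expect this last interplay --- establishing the parametrised Theorem~\ref{thrm:1} unconditionally, and then invoking Conjecture~\ref{conjecturePraeger0} to remove the dependence on $\ell$ and thereby extract the bounded-index subgroup $U$ --- to be the main obstacle; with this in hand, both implications of Theorem~\ref{thrm:main} follow as described above.
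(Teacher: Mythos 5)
Your easy direction is correct and is essentially the paper's argument in a slightly different guise: the paper phrases $\omega(\Gamma_G)\le n$ as a pigeonhole argument on cosets of $U'$, whereas you phrase it via distinctness of images of a fixed point; either way, the hypothesis $\bigcup_g U^g=\bigcup_g(U')^g$ makes the two derangement graphs coincide, and Conjecture~\ref{conjecture1} applied to the degree-$|G:U|$ action gives $|G:U|\le F(n+1)$.

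For the hard direction you correctly identify the top-level architecture --- establish Theorem~\ref{thrm:1} unconditionally, then use Conjecture~\ref{conjecturePraeger0} to bound $\ell$ in terms of $c$, and finish with $n\le h(c,\ell)$ --- but you stop exactly where the paper's work begins. The step ``use Conjecture~\ref{conjecturePraeger0} to bound $\ell$'' is not a single application of the conjecture; it is a delicate recursion, and without it the proposal has a genuine gap. Concretely, the paper takes a normal imprimitivity series $\Omega=\Sigma_\ell<\cdots<\Sigma_0=\{\Omega\}$ of maximal length and extracts a subsequence $0=i_0<i_1<\cdots<i_\kappa=\ell$, where $i_{j+1}$ is the first level at which $G_{(\Sigma_{i_j})}$ has a derangement. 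Lifting one derangement $g_{i_j}$ from each step, the products $g_{i_0}^{\varepsilon_0}\cdots g_{i_{\kappa-1}}^{\varepsilon_{\kappa-1}}$ with $\varepsilon_j\in\{0,1\}$ form a clique of size $2^{\kappa-1}$ in $\Gamma_G$, giving $\kappa\le\log_2 c+1$. Then, for each $j$, the failure of $G_{(\Sigma_{i_{j-1}})}$ to have a derangement on $\Sigma_{i_j-1}$ is precisely the covering condition $\bigcup_g U^g=\bigcup_g(U')^g$ with $U'=G_{(\Sigma_{i_{j-1}})}$ and $U=G_{(\Sigma_{i_{j-1}})}\cap G_{\{\Delta_{i_j-1}\}}$; applying Conjecture~\ref{conjecturePraeger0} here, together with Theorem~\ref{thrm:1} to control $[G:G_{(\Sigma_{i_{j-1}})}]$, yields $i_j-i_{j-1}\le\log_2\bigl(f(h(c,i_{j-1})!)!\bigr)$, and the recursion closes to bound $\ell=i_\kappa$ by a function of $c$ alone. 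Your sketch does not contain this clique construction nor the per-step application of Conjecture~\ref{conjecturePraeger0}, which together constitute the substance of the converse.

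One further remark: the framing ``produce a bounded-index overgroup $U$ of the point stabiliser with the same union of conjugates, then apply Conjecture~\ref{conjecturePraeger0} once'' is sound in principle but is not what the paper does. The paper never extracts such a $U$; once $\ell$ is bounded, it concludes $n\le h(c,\ell)$ directly from Theorem~\ref{thrm:1}, bypassing your dictionary between overgroups and block systems. Conjecture~\ref{conjecturePraeger0} is instead applied $\kappa-1$ times to different pairs of kernels and block stabilisers deep inside the chain. So the ``bounded-index $U$'' heuristic, while a reasonable entry point, should be dropped (or reworked) when you fill in the argument, as it points in a slightly wrong direction.
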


The proof of this equivalence is not immediate and relies on an intermediate result which is of independent interest.

\begin{theorem}\label{thrm:1}
There exists a function $h:\mathbb{N}\times\mathbb{N}\to\mathbb{N}$ such that if $G$ is a transitive permutation group of degree $n$ whose derangement graph has no cliques of size $c$, then $n\le h(c,\ell)$, where $\ell$ denotes the maximum length of a normal imprimitivity series of $G$.
\end{theorem}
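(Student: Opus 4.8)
The plan is to argue by induction on $\ell$. If $G$ is primitive, then its only normal imprimitivity series is $1\prec\{\Omega\}$, so $\ell=1$, and the primitive (indeed innately transitive) case of Conjecture~\ref{conjecture1}, established in~\cite{FPS}, gives $n\le F(c)$; so set $h(c,1)=F(c)$. For $\ell\ge 2$, fix a \emph{minimal} nontrivial block system $\mathcal B$ of $G$, let $m$ be the number of blocks and $k$ their common size (so $n=mk$ with $1<k<n$), write $G^{\mathcal B}$ for the transitive group of degree $m$ induced on $\mathcal B$, and, for a block $B$, write $G_B^B$ for the transitive group of degree $k$ induced on $B$ by its setwise stabiliser $G_B$; by minimality of $\mathcal B$ the group $G_B^B$ is primitive. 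The bound $h(c,\ell)$ will come from bounding $m$ and $k$ separately.

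Bounding $m$ is routine. Lifting a clique of $\Gamma_{G^{\mathcal B}}$ to arbitrary coset representatives in $G$ shows $\omega(\Gamma_{G^{\mathcal B}})\le\omega(\Gamma_G)<c$, since an element of $G$ moving every block is fixed-point-free on $\Omega$; moreover any normal imprimitivity series of $G^{\mathcal B}$ extends, by prepending a series from the singleton partition up to $\mathcal B$ (which has length at least $1$), to one of $G$, so the corresponding parameter for $G^{\mathcal B}$ is at most $\ell-1$. By the inductive hypothesis, $m\le h(c,\ell-1)$.

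Bounding $k$ is the crux. Let $K$ be the kernel of the $G$-action on $\mathcal B$. If $K=1$ then $G\hookrightarrow\Sym(\mathcal B)$, so $n\le|G|\le m!\le h(c,\ell-1)!$; assume then $K\ne 1$. Since $K$ fixes every block setwise, $K^B$ is a nontrivial normal subgroup of the primitive group $G_B^B$, hence transitive on $B$ (and the blocks are precisely the $K$-orbits). A clique of $\Gamma_{G_B^B}$ of size $t$ lifts to distinct $h_1,\dots,h_t\in G_B$ pairwise discordant on $B$; their images in $G^{\mathcal B}$ all fix $B$, hence take at most $(m-1)!$ values, so some sub-collection of size $\ge t/(m-1)!$ has a common image, and after translating by one of its members we obtain a clique of $\Gamma_{K^B}$ of that size. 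Thus $\omega(\Gamma_{G_B^B})\le (m-1)!\,\omega(\Gamma_{K^B})$, and everything reduces to bounding $\omega(\Gamma_{K^B})$. This is where the main difficulty lies: the natural maps lift cliques from coarser actions to finer ones but not conversely, so one cannot simply apply the inductive hypothesis to $K^B$. Instead I would exploit that $K$ embeds as a subdirect subgroup of the product $\prod_{B'\in\mathcal B}K^{B'}$ of its pairwise equivalent block constituents, with $G^{\mathcal B}$ permuting the $m$ factors transitively; a Goursat-type analysis of this decomposition should convert a clique of $\Gamma_{K^B}$ of size $t$ into a clique of $\Gamma_K$ of size at least some function of $t$ and $m$ that tends to infinity with $t$ for fixed $m$ (when the pairwise identifications between factors are small, $K$ is close to the full direct product and a diagonal clique works; when they are large, $K^B$ itself cannot carry a large clique). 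Since $\Gamma_K$ is the subgraph of $\Gamma_G$ induced on $K$ we have $\omega(\Gamma_K)\le\omega(\Gamma_G)<c$, so $\omega(\Gamma_{K^B})$, and hence $\omega(\Gamma_{G_B^B})$, is bounded in terms of $c$ and $m\le h(c,\ell-1)$; applying~\cite{FPS} to the primitive group $G_B^B$ then bounds $k$, and $n=mk$ yields the required $h(c,\ell)$.
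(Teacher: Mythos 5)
Your overall architecture---induction on~$\ell$, bound the number of blocks via the induced action, bound the block size via the primitive block-transitive constituent and \cite{FPS}---is a reasonable skeleton and shares some of the paper's spirit: the paper likewise peels off the last term $\Sigma_{\ell-1}$ of a maximal normal imprimitivity chain, lets $K=G_{(\Sigma_{\ell-1})}$, and uses the inductive hypothesis on $G/K$. However, there is a genuine gap at exactly the place you flag as ``the crux'': bounding $\omega(\Gamma_{K^B})$.

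You propose to view $K$ as a subdirect subgroup of $\prod_{B'}K^{B'}$ and assert that ``a Goursat-type analysis \dots should convert a clique of $\Gamma_{K^B}$ of size $t$ into a clique of $\Gamma_K$ of size at least some function of $t$ and $m$ that tends to infinity with $t$.'' This is not proved, and there is reason to doubt that a purely Goursat-style argument on $K$ itself can work. The kernel $K$ has no controlled structure a priori: it is not a direct product of simple groups, so the Goursat data describing it inside $\prod_{B'}K^{B'}$ can be arbitrarily complicated, and the intermediate regime between ``near-full direct product'' and ``full diagonal'' is exactly where your heuristic is silent. The paper sidesteps this by passing from $K$ to a \emph{minimal normal subgroup} $N\unlhd G$ contained in $K$; the maximality of $\ell$ forces $N$ and $K$ to have the same orbits, so nothing is lost, and $N\cong T^s$ is a direct power of a simple group. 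For such $N$, Scott's lemma gives a clean partition description of the point stabiliser $N_\alpha$, and the argument then genuinely splits by cases: $N$ abelian; $N$ non-abelian with $\pi_i(N_\alpha)<T_i$ (where Lemma~\ref{lem:aux1}, built on Saxl's covering theorem, Liebeck--Praeger--Saxl, Siegel's theorem, and Stewart's Lehmer-number bounds, is used to exhibit a clique whose size controls $|T|$); and $N$ non-abelian with full projections, further subdivided by the partition combinatorics (Lemma~\ref{lem:aux2}) and by CFSG type, with the crucial input Theorem~\ref{thrm:4} on $\mathrm{mpr}^\ast(T)$. None of this machinery is elementary, and the fact that the paper needs CFSG plus deep number theory is strong evidence that the step you hand-wave cannot be carried out by an elementary Goursat argument on $K$.

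Two smaller remarks. Your base case should be phrased as ``$\ell=1$ implies $G$ quasiprimitive'' rather than assuming $G$ primitive: maximality of $\ell$ forces every non-trivial normal subgroup to be transitive, and \cite{FPS} covers this (indeed all innately transitive groups), so the conclusion is fine but the logic should run from $\ell=1$ to the structural hypothesis, not the reverse. Also, your auxiliary block system $\mathcal B$ must be \emph{normal} for the chain-length argument that gives $m\le h(c,\ell-1)$; you essentially establish this when $K\ne 1$ (the $K$-orbits are the blocks), but you should not begin from an arbitrary minimal block system---you should begin, as the paper does, from the maximal normal imprimitivity series itself.
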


Here, a system of imprimitivity $\Sigma$ for $G$ is called \textit{\textbf{normal}} if there exists a normal subgroup $N\unlhd G$ whose orbits are precisely the blocks of $\Sigma$. Given two such systems $\Sigma$ and $\Sigma'$, we write $\Sigma'\le \Sigma$ to mean that $\Sigma'$ refines $\Sigma$. A \textit{\textbf{normal imprimitivity series}} of $G$ is a series
$$\Sigma_\ell<\Sigma_{\ell-1}<\cdots<\Sigma_1,$$
where $\Sigma_i$ is a normal system of imprimitivity, for each $i$.

Theorem~\ref{thrm:1} can be viewed as a combinatorial analogue of a result of Praeger~\cite[Theorem~4.6]{praeger}. Indeed, Praeger proved that Conjecture~\ref{conjecturePraeger0} holds if the function $f$ is allowed to depend not only on $n$, but also on the length of a chief series of $G$. In Theorem~\ref{thrm:1}, the role of a chief series is played instead by the length of a normal imprimitivity series. This resemblance is, however, mostly superficial and aesthetic, since the methods of proof are substantially different. (For the interested reader wishing to consult Praeger’s proof, we note that it contained a minor error, which was later corrected in~\cite{FHS}.)

The proof of Theorem~\ref{thrm:1} is also far from straightforward. One of its main ingredients is a result on elements of prime order in finite simple groups of Lie type~\cite{AS}, which in turn relies on deep number-theoretic properties of Lehmer numbers.

We do not attempt to optimize the function $h(c,\ell)$ appearing in Theorem~\ref{thrm:1}, since our proof ultimately relies on certain number theoretic results used in the main results in~\cite{AS}, for which no effective bounds are currently available, to the best of our knowledge.

We conclude this introductory section with a brief non-mathematical remark that we cannot resist making. We find it striking that a seemingly naive question such as~\cite[Question~6.1]{KRS} on derangement graphs conceals deep connections with algebraic number theory via Kronecker classes, with number theory through Lehmer numbers, and with the classification of finite simple groups.

\section{Auxiliary results}\label{sec:auxiliary}
We first recall a theorem of Saxl.
\begin{theorem}[{\cite[Proposition~2]{Saxl}}]
\label{thrm:saxl}Let $T$ be a non-abelian simple group and let $M$ be a subgroup of $T$. If
$$T=\bigcup_{\varphi\in\mathrm{Aut}(T)}M^\varphi,$$
then $T=M$.
\end{theorem}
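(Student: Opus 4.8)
The plan is to establish the contrapositive: if $M$ is a proper subgroup of $T$, then $\bigcup_{\varphi\in\mathrm{Aut}(T)}M^\varphi\neq T$. Since every proper subgroup is contained in a maximal one and enlarging $M$ only enlarges the union, I may assume throughout that $M$ is a maximal subgroup of $T$. The one structural fact that drives the argument is that each $\mathrm{Aut}(T)$-conjugate $M^\varphi$ is isomorphic to $M$ (via $\varphi$), hence has exactly the same set of element orders as $M$; consequently, if $\bigcup_\varphi M^\varphi=T$, then $M$ must already contain an element of every order that occurs in $T$. (One can say a little more: $\bigcup_\varphi M^\varphi$ is a union of $T$-conjugacy classes and is $\mathrm{Aut}(T)$-invariant, so every $\mathrm{Aut}(T)$-orbit of conjugacy classes of $T$ must contain a class meeting $M$; equivalently, in the language used elsewhere in this paper, $\bigcup_\varphi M^\varphi\neq T$ asserts precisely that the coset actions of $T$ on the sets $T/M^\varphi$, $\varphi\in\mathrm{Aut}(T)$, possess a common derangement.) It therefore suffices to produce, for each maximal subgroup $M<T$, an element order realised in $T$ but not in $M$; in the most favourable situation it is enough to find a prime $p$ dividing $|T|$ but not $|M|$, since an element of order $p$ then lies in no conjugate of $M$.

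The verification of this criterion proceeds through the classification of finite simple groups, family by family. For $T=A_n$ the maximal subgroups are the intransitive, imprimitive and primitive ones; for most of them a $p$-cycle with $p$ a suitably chosen prime less than $n$ lies in no conjugate of $M$ — for primitive $M$ this uses Jordan's theorem, that a primitive group of degree $n$ containing a $p$-cycle with $p\le n-3$ already contains $A_n$ — while the remaining cases (certain intransitive maximal subgroups, notably the point stabiliser $M=A_{n-1}$ when $n$ is composite, and small values of $n$) are handled by directly observing that the set of element orders of $M$ is strictly smaller than that of $A_n$. For $T$ of Lie type over $\mathbb{F}_q$, the main tool is the existence of primitive prime divisors (Zsigmondy primes) of $q^e-1$ for appropriate $e$: such a prime divides $|T|$ but divides the order of only a short, explicit list of maximal subgroups (normalisers of maximal tori, subfield subgroups, and a few geometric families), and these are then disposed of either by using a second Zsigmondy prime, or by the direct estimate $[\mathrm{Aut}(T):N_{\mathrm{Aut}(T)}(M)]\,(|M|-1)+1<|T|$, which holds because $|\mathrm{Out}(T)|$ is negligible next to $[T:M]$ in each remaining case, or by noting that a torus normaliser has far too few element orders. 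Finally, the $26$ sporadic groups, together with the finitely many small groups excluded from the generic arguments, are settled by inspecting their completely known lists of maximal subgroups and conjugacy classes.

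The hardest part — and the point where the proof genuinely has to work rather than invoke a single uniform inequality — is the family of \emph{large} maximal subgroups $M$: those whose set of element orders essentially coincides with that of $T$, and whose index is small enough that the crude counting bound $[\mathrm{Aut}(T):N_{\mathrm{Aut}(T)}(M)]\,(|M|-1)+1<|T|$ already fails once $\mathrm{Out}(T)\neq1$. For these the element-order criterion cannot be checked superficially; one needs the detailed description of the maximal subgroups of classical and exceptional groups (Aschbacher's theorem and the ensuing literature) together with precise information on which semisimple and unipotent element orders are actually realised inside $M$ versus $T$. I expect this to be the main obstacle, and the treatment of these cases to amount to a finite but somewhat intricate case analysis rather than a clean estimate.
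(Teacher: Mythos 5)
The paper does not prove this statement: Theorem~\ref{thrm:saxl} is quoted verbatim from Saxl's 1988 article (cited as \cite[Proposition~2]{Saxl}) and is used as a black box. So there is no internal proof to compare against, and your task was really to reconstruct Saxl's argument.

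Your outline points in the right general direction and shares its skeleton with Saxl's actual proof (and, incidentally, with the paper's own proof of Lemma~\ref{lem:aux1}, which builds on Saxl's result): reduce to maximal $M$, observe that the covering hypothesis forces $M$ to realise every element order of $T$, and then run through CFSG using primitive prime divisors for Lie type, Bertrand plus Jordan's $p$-cycle theorem for $\mathrm{Alt}(m)$, and the tables for sporadic and small cases. The reduction to maximal $M$ and the necessity of the element-order condition are correctly argued.

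Nevertheless there is a genuine gap, and you have located it without closing it. The element-order criterion is \emph{necessary} but not \emph{sufficient}: it refutes the covering only if you can actually exhibit, for each maximal $M$, an element order occurring in $T$ and not in $M$, and you never establish that this is always possible. Your fallback counting bound does not rescue the hard cases either. Writing $k$ for the number of $T$-classes into which the $\mathrm{Aut}(T)$-class of $M$ splits, one has $N_{\mathrm{Aut}(T)}(M)\cap T = N_T(M)=M$, whence $[\mathrm{Aut}(T):N_{\mathrm{Aut}(T)}(M)]=k\,[T:M]$ and the bound $[\mathrm{Aut}(T):N_{\mathrm{Aut}(T)}(M)]\,(|M|-1)+1<|T|$ is equivalent to $k(|T|-[T:M])<|T|-1$. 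This holds precisely when $k=1$ (which is just Jordan's 1872 derangement theorem in disguise) and \emph{always} fails once $k\ge 2$, independently of how small $|\mathrm{Out}(T)|$ is relative to $[T:M]$; so your heuristic that the estimate holds ``because $|\mathrm{Out}(T)|$ is negligible next to $[T:M]$'' is not the right dichotomy. The entire content of Saxl's theorem is therefore concentrated in the fused cases $k\ge 2$, and for those your proposal offers only the expectation of ``a finite but somewhat intricate case analysis'' without supplying it, nor ruling out the a priori possibility that some such $M$ has exactly the same spectrum as $T$. As an outline of the strategy the proposal is sound and honest about where the difficulty lies, but as a proof it is not complete at the one point where the statement is actually hard.
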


Next we recall one of the main results from~\cite{AS}. Let $T$ be a group.
Given $x \in T$, we define $\mathrm{mpr}^\ast(x)$ to be the number of generators in $\langle x \rangle$ belonging to distinct $\mathrm{Aut}(T)$-classes, that is
$$\mathrm{mpr}^*(x)=|\{ y^{\mathrm{Aut}(T)}\mid \langle x\rangle=\langle y\rangle\}|.$$
Also, we let ${\bf o}(x)$ denote the order of $x$. 
Given a prime number $p$, we let
\[
\mathrm{mpr}_p^\ast(T) = \max_{\substack{x \in T \\ {\bf o}(x) = p}} \mathrm{mpr}^\ast(x)\hbox{ and }
\mathrm{mpr}^\ast(T) = \max_{p\ \textrm{prime}} \mathrm{mpr}_p^\ast(T).
\]
\begin{theorem}[{\cite[Theorem~1.3]{AS}}]\label{thrm:4}
There exists an increasing function $f_{Lie}:\mathbb{N}\to \mathbb{N}$ such that, for a finite simple group of Lie type $T$, the order of $T$ is at most $f_{Lie}(\mathrm{mpr}^\ast(T))$.
\end{theorem}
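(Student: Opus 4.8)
The plan is to work with the defining characteristic and the Lie rank of $T$ as the main parameters. Since there are only finitely many Lie types of each rank $r$, and a simple group of Lie type of rank $r$ over $\mathbb{F}_q$ has order at most $q^{c(r)}$ for a suitable function $c$, it suffices to prove that $\mathrm{mpr}^\ast(T)\to\infty$ as $\max(r,q)\to\infty$; the function $f_{Lie}$ is then obtained by inversion and, at the end, replaced by $n\mapsto\max_{m\le n}f_{Lie}(m)$ to make it increasing. The starting point is the elementary identity $\mathrm{mpr}^\ast(x)=(p-1)/e_p$, valid for every $x\in T$ of prime order $p$, where $e_p$ is the order of the image of $N_{\mathrm{Aut}(T)}(\langle x\rangle)$ in $\mathrm{Aut}(\langle x\rangle)\cong(\mathbb{Z}/p)^\times$: two generators of $\langle x\rangle$ are $\mathrm{Aut}(T)$-conjugate exactly when they are conjugate under this normaliser, which acts on the $p-1$ generators of $\langle x\rangle$ with free orbits of length $e_p$. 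So the whole problem becomes: for $T$ with $\max(r,q)$ large, exhibit a prime $p$ dividing $|T|$ with $p$ large compared with $e_p$.

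To control $e_p$ I would take $x$ to be a suitable regular semisimple element. If $x$ is regular semisimple then $C_T(x)$ is a maximal torus $S$; any automorphism of $T$ normalising $\langle x\rangle$ normalises $C_T(x)=S$, and since $S$ is abelian the inner-diagonal automorphisms act on $\langle x\rangle$ only through the relative Weyl group $N_T(S)/S$, contributing a factor at most $|W|$, while the field and graph automorphisms contribute at most $f$ and $3$, where $q=\ell^f$ with $\ell$ the characteristic. Hence $e_p\le 3f|W|$. One improves on this by placing $x$ inside a Levi subgroup: choosing $x$ of order $p$ inside a Singer-type cyclic torus of a factor $\mathrm{GL}_d$ of a Levi subgroup, acting as the identity on a complement, everything outside that factor centralises $x$, and one gets $e_p\le Cfd'$ with $d'=\mathrm{ord}_p(q)\mid d$ and $C$ absolute; there are analogous constructions for the other classical types. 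For exceptional and bounded-rank groups $|W|$ is bounded, so the crude bound already gives $e_p=O(f)$.

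It remains to produce a prime $p$ dividing $|T|$ that beats this bound, and here I would distinguish two regimes. If $q\to\infty$, irrespective of the rank, take $p$ to be the largest prime factor of $q-1$ or of $q+1$, realised by a regular element inside a $\mathrm{GL}_1$- or $\mathrm{GL}_2$-type (respectively $\mathrm{SU}_2$-type) Levi factor, so that $e_p\le Cf$; one then needs $P(q^2-1)/f\to\infty$, where $P(m)$ denotes the largest prime factor of $m$. When $f$ is large this is Stewart's lower bound on the largest prime factor of $a^n-b^n$, uniform in the bases, which gives $P(\ell^f-1)/f\to\infty$; when $f$ is bounded and $\ell\to\infty$ it follows from the finiteness, for each fixed $B$, of the primes $\ell$ for which $\ell^2-1$ is $B$-smooth, a St{\o}rmer--Thue statement which is the classical face of the theory of Lehmer numbers. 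If instead $q$ stays bounded and the rank $r\to\infty$, take $d$ comparable to $r$ and $p$ the largest prime factor of $q^d-1$, realised inside a $\mathrm{GL}_d$-Levi factor as above, so that $e_p\le Cd\log q$ is $O(d)$; then $\mathrm{mpr}^\ast(x)=(p-1)/e_p$ grows with $r$, again by Stewart's bound on $P(q^d-1)$. For the twisted groups the same scheme applies with $q+1$ replacing $q-1$ and with the relevant torus orders being Lehmer or Lucas values ($q^d-(-1)^d$ for the unitary and ${}^2D$ families, and $q\pm\sqrt{2q}+1$ or $q\pm\sqrt{3q}+1$ for the Suzuki and Ree families); the mere existence of the required primitive prime divisors in all but finitely many cases comes from Zsigmondy's theorem together with the Bilu--Hanrot--Voutier classification of Lehmer numbers without primitive divisors, and the finitely many residual groups are checked directly.

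The main obstacle — and the reason $f_{Lie}$ is ineffective — is precisely this last, number-theoretic, ingredient: the lower bounds for $P(q^d-1)$ and the finiteness of smooth values rest on transcendence theory (estimates for linear forms in logarithms and results on Thue equations), whose constants are either astronomical or not explicit at all. By contrast, the group-theoretic part is a long but routine walk through the classical groups, the twisted groups and the exceptional groups, in each of which one pins down the correct Levi subgroup and cyclotomic (or Lehmer) value and assembles the resulting estimates.
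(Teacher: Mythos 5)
The paper does not prove Theorem~\ref{thrm:4}; it cites it as \cite[Theorem~1.3]{AS}, so there is no internal proof to compare against. Assessing your proposal on its own merits and against the hints the paper does give (the phrase ``deep number-theoretic properties of Lehmer numbers,'' the citation of \cite{Stewart}, and the remark that ineffectivity comes from number-theoretic inputs), your reconstruction looks right in outline and is consistent with all of those signals.

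A few points worth noting. The opening identity $\mathrm{mpr}^\ast(x)=(p-1)/e_p$ is correct, and your justification is the right one: any $\sigma\in\mathrm{Aut}(T)$ carrying one generator of $\langle x\rangle$ to another necessarily normalises $\langle x\rangle$, and $N_{\mathrm{Aut}(T)}(\langle x\rangle)$ acts on the $p-1$ generators through a subgroup of $(\mathbb{Z}/p)^\times$ with all orbits of equal length $e_p$. The two-regime split (bounded rank with $q\to\infty$ versus bounded $q$ with rank $\to\infty$), the use of \cite{Stewart} and of a Shorey--Tijdeman/St{\o}rmer-type statement for the two subcases of the first regime, and the invocation of Zsigmondy and Bilu--Hanrot--Voutier for the Suzuki/Ree torus orders all fit the picture the paper paints. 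Your attribution of the ineffectivity to linear forms in logarithms and Thue-type results is exactly the source of the ``no effective bounds'' caveat the paper makes about $h(c,\ell)$.

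The one place I would press you is the passage from the crude bound $e_p\le 3f|W|$ to the refined $e_p\le Cfd'$. Your refined element $x$, sitting in a $\mathrm{GL}_d$ (or $\mathrm{GU}_d$) Levi factor with eigenvalue $1$ of high multiplicity on the complement, is no longer regular semisimple, so the ``$C_T(x)$ is a maximal torus'' framework from your first paragraph does not literally apply; what actually controls $e_p$ is the eigenvalue pattern of $x$ rather than the relative Weyl group of a maximal torus. The argument still works --- an automorphism normalising $\langle x\rangle$ must permute the eigenvalue multiset, and the high multiplicity of $1$ pins down the nontrivial eigenvalues up to field and graph twists, killing the Weyl-group factor --- but as written the proposal slides between the two constructions without flagging that they are different, and the claim ``everything outside that factor centralises $x$'' is a statement about the eigenspace decomposition, not about the maximal torus. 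This also matters in the $q\to\infty$ regime when the rank is simultaneously unbounded, which your two-case split does cover but only because the Levi construction, not the regular-semisimple one, keeps $e_p$ away from the Weyl-group blow-up. Spelling out that eigenvalue argument carefully across all classical and twisted types is exactly the ``long but routine walk'' you describe, and a finished write-up would need it; but the strategy is sound.
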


We also require~\cite[Theorem~1.1]{FPS}, but only for the special case where the group is quasiprimitive. The full statement in~\cite{FPS} is considerably broader, as it concerns innately transitive groups. Recall that a permutation group $G$ is said to be \textit{\textbf{innately transitive}} if $G$ possesses a minimal normal subgroup that acts transitively, whereas $G$ is \textit{\textbf{quasiprimitive}} if every non-identity normal subgroup of $G$ acts transitively.

\begin{theorem}[\cite{FPS}, Theorem~1.1]\label{thrm:FPS}
There exists a function $f_1:\mathbb{N}\to\mathbb{N}$ such that, if $G$ is innately transitive of degree $n$ and the derangement graph of $G$ has no clique of size $c$, then $n\le f_1(c)$.
\end{theorem}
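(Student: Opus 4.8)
The plan is to argue by contrapositive: assume $n$ is large compared with $c$ and exhibit a clique of size $c$ in $\Gamma_G$. The workhorse throughout is a \emph{prime-order trick}. If $x\in G$ has prime order $p$ and $x$ is a derangement, then every non-trivial power of $x$ is again a derangement: indeed, if $x^i\in G_\omega^{\,g}$ for some $i\not\equiv 0\pmod p$ and some $g$, then $\langle x\rangle=\langle x^i\rangle\le G_\omega^{\,g}$, so $x\in G_\omega^{\,g}$, a contradiction. Hence the set $\langle x\rangle\subseteq G$ is a clique of size $p$ in $\Gamma_G$, and it suffices to produce a derangement of prime order at least $c$. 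A second, routine, tool is that cliques pass upward along quotients: if $\Sigma$ is a $G$-invariant partition of $\Omega$, a permutation with no fixed block has no fixed point, so a clique in $\Gamma_{G^{\Sigma}}$ lifts to a clique of the same size in $\Gamma_{G}$ (this is exactly the flexibility that the authors build in by allowing non-faithful actions).

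\textbf{Reduction to the quasiprimitive case and the ``geometric'' types.} Let $N$ be a transitive minimal normal subgroup of the innately transitive group $G$. If $G$ is not quasiprimitive, pick a non-trivial intransitive $K\unlhd G$; minimality forces $N\cap K=1$, so $N$ embeds isomorphically into $G^{\Sigma}$, where $\Sigma$ is the set of $K$-orbits, and $N^{\Sigma}$ is again a transitive minimal normal subgroup of the innately transitive group $G^{\Sigma}$. Iterating and using the lifting remark, we may assume $G$ quasiprimitive with plinth $N=T^{k}$, $T$ simple. If $N$ is abelian, quasiprimitivity makes $C_G(N)$ transitive, so $N$ is regular; the same holds if $N$ is non-abelian and regular. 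In either case $N\setminus\{1\}\subseteq\mathcal D(G)$ and $N$ is itself a clique of size $n$, so we are done unless $n<c$. Assume then $N=T^{k}$ non-abelian and non-regular; by the O'Nan--Scott-type structure theory for quasiprimitive groups, $G$ has type almost simple, simple diagonal, compound diagonal, or product action. For the diagonal types, elements of $\mathrm{soc}(G)=T^{k}$ supported on a single coordinate lie in no conjugate of the point stabiliser (the straight diagonal), and together with the ``one non-trivial coordinate per factor'' elements they yield $\omega(\Gamma_G)\ge\max(|T|,k)$; since $n$ is a fixed power of $|T|$ and $k$, this already forces a clique of size $c$ once $n$ is large, and the compound case reduces to this via the product-action argument below. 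For product action $\Omega=\Delta^{k}$ with $T$ acting on $\Delta$ of almost simple or simple diagonal type, one-coordinate elements give $\omega(\Gamma_G)\ge\max\bigl(k,\omega(\Gamma_{T\text{ on }\Delta})\bigr)$; as $n=|\Delta|^{k}$, an induction on $n$ reduces us to the action of $T$ on $\Delta$. Thus everything comes down to the almost simple case.

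\textbf{The almost simple case.} Here $T\le G\le\mathrm{Aut}(T)$, with $T$ transitive of degree $n\le |T|$, so $|T|$ is large. The sporadic $T$ contribute only boundedly many groups. If $T=\mathrm{Alt}(m)$ with $m$ large, one subdivides according to the type of the point stabiliser $T_\omega$ (classification of the maximal subgroups of $\Sym(m)$): for $T_\omega$ intransitive or imprimitive, and for the primitive types as well, a suitably chosen cycle of prime order (e.g.\ an $m$-cycle when $m$ is prime, acting without fixed points on every non-trivial subset- or partition-type action simultaneously) is a derangement on $\Omega$, and the prime-order trick produces the clique. If $T$ is of Lie type, we invoke Theorem~\ref{thrm:4}: since $|T|$ is large, $\mathrm{mpr}^{\ast}(T)$ is large, so $T$ has an element $x$ of prime order $p$ whose cyclic group has many $\mathrm{Aut}(T)$-classes of generators; as these classes partition the $p-1$ generators, $p-1\ge\mathrm{mpr}^{\ast}(x)$, hence $p\ge c$. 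If $x$ is a derangement we finish by the prime-order trick. Otherwise $\langle x\rangle$ is conjugate into $T_\omega$; here one uses the classification of maximal subgroups of the finite simple groups of Lie type, the fact underlying Theorem~\ref{thrm:4} that the elements realising $\mathrm{mpr}^{\ast}$ are primitive-prime-divisor elements lying in very few maximal overgroups, and Saxl's theorem (Theorem~\ref{thrm:saxl}) to forbid $T_\omega$ from covering $T$ up to $\mathrm{Aut}(T)$-conjugacy, to conclude that either $x$ (or another element of prime order at least $c$ in the same family) may be chosen outside every conjugate of $T_\omega$, or $T_\omega$ is forced to be of a restricted geometric type (a parabolic, a stabiliser of a subspace decomposition, a subfield subgroup, a torus normaliser, or one of finitely many exceptions), in which case the action on $T/T_\omega$ is explicit enough to build a clique of size $c$ by a direct Singer-cycle-type construction analogous to the alternating case.

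\textbf{The main obstacle.} The delicate part is the Lie-type almost simple case: one must run through the list of maximal subgroups of the simple groups of Lie type, and, in every ``non-generic'' configuration for the point stabiliser, still produce a derangement of \emph{prime} order at least $c$ rather than merely an element of large composite order — this is precisely the point for which Theorem~\ref{thrm:4}, and behind it the number-theoretic results on Lehmer numbers, are needed. The alternating case is lighter but requires the same care with cycle structures. All other steps (the reductions to quasiprimitive and to almost simple, and the diagonal and product-action types) are comparatively soft, being handled by the prime-order trick applied inside the socle.
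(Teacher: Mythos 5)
First, note that the paper itself contains no proof of this statement: it is imported verbatim from \cite{FPS} and used as a black box (only the quasiprimitive case is actually invoked). So the comparison must be with the proof in that reference, whose broad architecture your sketch does echo: pass to the plinth, split by the O'Nan--Scott-type structure of quasiprimitive groups, and fight the almost simple case using primitive-prime-divisor elements, Saxl's covering theorem and the classification of maximal subgroups. At that level of resolution your outline is reasonable. Two of your reductions, however, have real gaps. The reduction from innately transitive to quasiprimitive, as written, only bounds $|\Sigma|$, not $n=|\Sigma|\cdot(\text{block size})$; to close it you must add that the intransitive normal subgroup $K$ satisfies $[N,K]\le N\cap K=1$, hence $K$ centralises the transitive group $N$ and is therefore semiregular, so $K$ itself is a clique in $\Gamma_G$ and the block size $|K|$ is less than $c$. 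Without that observation the induction does not terminate.

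The more serious gap is that the entire almost simple case --- which you yourself identify as ``the main obstacle'' --- is asserted rather than proved, and the tool you lean on throughout, the prime-order trick, provably cannot carry it. A derangement of prime order $p\ge c$ need not exist even when the clique number is enormous: take $T=\mathrm{Alt}(m)$ with $m=2^t$ in its natural action. An element of odd prime order $p$ is fixed-point-free only if $p$ divides $m$, so here every derangement of prime order is an involution and the trick yields only cliques of size $2$; yet an elementary abelian regular subgroup of order $2^t$ shows $\omega(\Gamma_T)\ge m$, so the theorem holds for a reason your method does not see. The same failure mode recurs in Lie type: the element realising $\mathrm{mpr}^\ast(T)$ may well be covered by the conjugates of $T_\omega$ (parabolic, subfield and orthogonal-type stabilisers are exactly the stubborn cases in \cite{LPS}), and your fallback --- ``a direct Singer-cycle-type construction'' when $T_\omega$ is of restricted geometric type --- is precisely the content of the theorem in those cases, not an argument for it. (The proof of Lemma~\ref{lem:aux1} in Section~\ref{sec:lemma} of this paper gives a taste of how much case analysis the residual configurations actually require.) As it stands, the proposal is a roadmap with the hardest leg of the journey marked ``to be travelled''.
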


We also need some number theoretic results. For $b \in \mathbb{Z}$ with $b \ne 0$, let $P[b]$ denote the largest prime divisor of $b$, with the convention that $P[b]=1$ when $b \in \{\pm 1\}$.

\begin{lemma}[{\cite[Equation~(1)]{Siegel}}]\label{siegeltheorem}
Let $f(x)\in\mathbb{Z}[x]$ be a polynomial with at least two distinct roots. Then there exist two positive constants $c_f$ and $c_f'$ depending on $f$ only such that $P[f(q)]\ge c_f\log \log q$ for every $q\ge c_f'$.
\end{lemma}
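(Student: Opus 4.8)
Since this is a classical theorem of Siegel, the plan, were one to reprove it rather than simply cite \cite{Siegel}, is as follows. First I would reduce to the case where $f$ is squarefree in $\mathbb{Z}[x]$ of degree $d\ge2$. Writing $f=c\prod_i g_i^{e_i}$ with the $g_i$ the distinct monic irreducible factors of $f$ over $\mathbb{Q}$, let $g\in\mathbb{Z}[x]$ be an integral multiple of the radical $\prod_i g_i$, so that $\deg g\ge2$; then there is a finite set $T_f$ of primes (those dividing the content, the leading coefficient, and the denominators of the $g_i$) such that, for $p\notin T_f$, one has $p\mid f(q)$ if and only if $p\mid g(q)$. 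Hence as soon as $P[g(q)]>\max T_f$ we get $P[f(q)]\ge P[g(q)]$, and it is enough to prove the asserted inequality for $g$.

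Next, fix an odd prime $k>d$ and suppose $q$ is large with $P[f(q)]=y$, so that $f(q)=\pm\prod_{p\le y}p^{a_p}$ is $y$-smooth. Writing $a_p=kb_p+r_p$ with $0\le r_p<k$ gives $f(q)=\pm w z^k$, where $z=\prod_p p^{b_p}$ and $w=\prod_p p^{r_p}$ satisfies $\log w\le(k-1)\sum_{p\le y}\log p=O_f(y)$ by Chebyshev's estimate. Multiplying through by $w^{k-1}$ and using that $k$ is odd, we obtain an integral point $(X,Y)=(q,\pm wz)$ on the superelliptic curve
$$Y^k=G(X),\qquad G(X):=w^{k-1}f(X),$$
where $G\in\mathbb{Z}[X]$ has degree $d$ --- so $G$ is not a $k$-th power in $\mathbb{Q}[X]$, since $0<d<k$ --- and height $H(G)\le\exp(O_f(y))$.

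The substantive step, which is exactly the content of \cite[Equation~(1)]{Siegel}, is an upper bound for the integral points of $Y^k=G(X)$ that is uniform in $G$: Siegel's Diophantine approximation method (phrased nowadays through Roth's theorem, and producing an ineffective constant, which is harmless since the statement only claims existence of $c_f,c_f'$), or the effective treatments via Baker's theory of linear forms in logarithms due to Baker, Sprind\v{z}uk and Brindza, yields $\log\max(|X|,|Y|)\le H(G)^{O_f(1)}$ whenever $G$ is not a $k$-th power and $k\ge3$. Substituting $H(G)\le\exp(O_f(y))$ gives $\log\log q\le\log\log|X|\le O_f(y)=O_f\big(P[f(q)]\big)$, which rearranges to $P[f(q)]\ge c_f\log\log q$ for $q\ge c_f'$; the reduction of the first paragraph then carries this back from $g$ to the original $f$.

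The only genuinely difficult point is the third one, namely the bound on the integral points of the family $Y^k=G(X)$ uniform in the coefficients of $G$ --- that is, Siegel's theorem itself. Everything else is routine: keeping track of the finitely many exceptional primes in the reduction, noting that the curve has positive genus (equivalently, that $G$ is not a $k$-th power, which holds by the degree bound), and the Chebyshev estimate controlling $\log w$. I would not attempt to recover the sharper known lower bounds for $P[f(q)]$ (such as those of Shorey and Tijdeman), since the $\log\log q$ rate stated here is all that we shall use and is precisely what \cite{Siegel} supplies.
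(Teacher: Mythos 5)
The paper does not prove this lemma at all; it is quoted verbatim, with a citation to Equation~(1) of Shorey and Tijdeman \cite{Siegel}, and that is where the proof lives. So there is no paper proof to compare your sketch against. Your outline is a reasonable high-level reconstruction of the Siegel-style route: strip the polynomial down to its squarefree radical, write a $y$-smooth value $f(q)$ as $wz^k$ with $\log w=O_f(y)$ via Chebyshev, absorb $w$ into the coefficients to land on a superelliptic curve $Y^k=G(X)$ of positive genus (using $0<\deg G<k$), and then invoke a height bound for its integral points that is uniform in $H(G)$ to pull out $\log\log q=O_f(y)$. This is sound as a roadmap, and you are right that the entire weight is carried by the effective bound for $Y^k=G(X)$, which is exactly the deep ingredient and which you correctly attribute to Baker, Sprind\v{z}uk and Brindza. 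Two small caveats: the bound $\log\log q\ge c_f^{-1}P[f(q)]$ that you derive \emph{is} the Shorey--Tijdeman bound of Equation~(1), not something weaker than it, so the closing remark slightly misstates the relationship; and the actual Shorey--Tijdeman proof works more directly with $p$-adic linear forms in logarithms rather than passing through a superelliptic model, though the two methods are cousins. Given that the lemma is simply cited in the paper, citing it (as you essentially conclude you would) is the correct move, and reproving it is out of scope.
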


In the proof of Lemma~\ref{lem:aux1}, we need the following result of Liebeck, Praeger and Saxl.
\begin{theorem}[{\cite[Theorem~4]{LPS}}]\label{thrm:4LPS}
Let $T$ be a finite non-abelian simple group which is not an alternating group and let $M$ be a proper subgroup of $T$. Suppose that $|M|$ is divisible by each of the primes or prime powers indicated in the second or third column in Tables~$10.1-10.6$. Then the possibilities for $M$ are given in Tables~$10.1-10.6$.
\end{theorem}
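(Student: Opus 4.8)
The statement above is quoted from~\cite{LPS}, and its proof there is a substantial application of the Classification of Finite Simple Groups; here I only sketch the strategy one would follow. The plan is to reduce first to the case where $M$ is a \emph{maximal} subgroup of $T$: the divisibility hypothesis on $|M|$ passes to every subgroup of $T$ containing $M$, so it suffices to determine which maximal subgroups have order divisible by all the prescribed primes and prime powers, and then to record that an arbitrary $M$ satisfying the hypothesis is contained in one of these. After this reduction one splits the analysis, via the Classification, into the three standard regimes: $T$ sporadic, $T$ exceptional of Lie type, and $T$ classical.

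For $T$ sporadic the argument is a finite verification: the maximal subgroups are tabulated in the Atlas and its updates, $|T|$ involves only a bounded set of primes, and one reads off directly which maximal subgroups meet the divisibility condition. For $T$ exceptional of Lie type the maximal subgroups are likewise known (through the work of Liebeck, Seitz and others), and the same inspection applies, although some care is needed because certain families --- subfield subgroups, and the almost simple maximal subgroups --- occur in infinitely many instances as the underlying field grows.

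The substantive case is that of the classical groups $T=\mathrm{Cl}_d(q)$ of unbounded dimension $d$, where there is no finite list to inspect. Here one invokes Aschbacher's theorem, which assigns every maximal subgroup of $T$ to one of the geometric classes $\mathcal{C}_1,\dots,\mathcal{C}_8$ or to the class $\mathcal{S}$ of almost simple irreducible subgroups. For each class one needs an upper bound for $|M|$ together with a description of the set of primes dividing $|M|$, and then one compares with the prescribed list. The decisive point is that, by the Bang--Zsigmondy theorem, $|T|$ is divisible by primitive prime divisors of $q^i-1$ for several values of $i$ close to $d$; such primes exceed $d$, so they cannot divide the order of a maximal subgroup whose structure confines its prime content below $d$. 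Insisting that all the prescribed large primes divide $|M|$ simultaneously therefore eliminates almost every maximal subgroup and isolates precisely those appearing in Tables~$10.1$--$10.6$.

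The hard part is this uniform treatment of the classical groups of large dimension. Concretely, one has to produce, for each classical type, enough primitive prime divisors of the required sizes; one has to deal with the genuine exceptions to Bang--Zsigmondy (such as $2^6-1$, and $q^2-1$ with $q$ a Mersenne prime), where the expected primitive prime divisor is absent and a separate direct argument is forced; and one has to obtain order estimates precise enough to separate, say, a reducible $\mathcal{C}_1$ subgroup from an imprimitive $\mathcal{C}_2$ one, since both can absorb many small primes. Once these points are settled, only finitely many low-dimensional classical groups remain --- where the Aschbacher classes are no longer generic --- and these are disposed of by the same explicit case check used for the sporadic and exceptional groups.
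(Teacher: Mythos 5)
The paper does not prove this statement: Theorem~\ref{thrm:4LPS} is quoted verbatim as an external result from Liebeck, Praeger and Saxl~\cite{LPS}, so there is no internal proof to compare your sketch against. You correctly recognise this, and your outline is a faithful high-level description of the methodology actually employed in~\cite{LPS}: reduce to maximal subgroups, invoke the Classification to split into sporadic/exceptional/classical regimes, handle the first two by inspection of known maximal-subgroup lists, and for unbounded-rank classical groups combine Aschbacher's theorem with primitive prime divisors of $q^i-1$ (and order estimates) to eliminate all but the tabulated possibilities. Your remark about the Bang--Zsigmondy exceptional cases and the need to separate different Aschbacher classes by order bounds is also an accurate description of where the technical work lies. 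Since the paper only cites the result, no further comparison is possible.
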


Before stating the final result of this section, we introduce some notation.

Let $p$ be a prime, and let $e,d$ be positive integers with $d\ge 2$. Set $q=p^{e}$. We assume that $T$ is one of the following groups:
\begin{itemize}
\item $T=\mathrm{PSL}_d(q)$, with $(q,d)\notin\{(2,2),(3,2)\}$;
\item $T=\mathrm{PSp}_d(q)'$, with $d$ even and $d\ge 4$;
\item $T=\mathrm{PSU}_d(q)$, with $d\ge 3$ and $(d,q)\ne (3,2)$;
\item $T=\Omega_d(q)$, with $d\ge 7$ and $dp$ odd;
\item $T=\mathrm{P}\Omega_d^\pm(q)$, with $d\ge 8$ and $d$ even;
\item $T\in\{E_6(q),E_7(q),E_8(q),{}^2E_6(q),F_4(q),{}^2F_4(q)',{}^3D_4(q)\}$, or 
$T={}^2B_2(q)$ with $q=2^e$ and $e\ge 3$ odd, or 
$T={}^2G_2(q)$ with $q=3^e$ and $e\ge 3$ odd.
\end{itemize}

Let $\Phi_n(x)$ be the $n$-th cyclotomic polynomial.
We follow the notation of~\cite{LPS} and, for each $\ell\in\mathbb{N}$, write $q_{\ell}=P[\Phi_{\ell e}(p)]$.

\begin{lemma}\label{lem:cats}
There exists a function $\lambda:\mathbb{N}\to\mathbb{R}$ with the following property.  
Let $T$ be a finite simple group of Lie type, let  $q$ be defined as above, and let $r$ be the Lie rank.  
For every $\ell\ge 1$, if $q_{\ell}$ divides $|T|$, then either
$|T|\le \lambda(q_{\ell},r)$
or $\ell e\le 2$.
\end{lemma}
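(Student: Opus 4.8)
\medskip

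The plan is to leverage Theorem~\ref{thrm:4LPS} of Liebeck, Praeger and Saxl together with Siegel's theorem (Lemma~\ref{siegeltheorem}) to force a dichotomy: either $q_\ell$ is so large that it cannot divide the order of any ``small'' subgroup, hence $T$ itself must be large (controlled by $q_\ell$ and $r$), or $\ell e$ is bounded. First I would recall the role played by the primitive prime divisor $q_\ell = P[\Phi_{\ell e}(p)]$: for the classical groups $T$ appearing in the list preceding the statement, the prime $q_\ell$ (when it is a genuine primitive prime divisor, i.e. when $\ell e$ is large enough to avoid the Zsygmondy exceptions) has the property that any proper subgroup $M < T$ whose order is divisible by $q_\ell$ lies in a very restricted list — this is exactly the content of Tables~10.1--10.6 in~\cite{LPS}, which Theorem~\ref{thrm:4LPS} packages. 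So the first key step is: if $q_\ell \mid |T|$ and $\ell e \ge 3$ (so that $q_\ell$ is a primitive prime divisor of $p^{\ell e} - 1$, or of the relevant $\Phi$-value, and in particular $q_\ell > \ell e$), then $q_\ell$ divides $|T|$ but the ``large'' prime structure of $T$ is pinned down. I would argue that any subgroup of $T$ of order divisible by $q_\ell$ is either all of $T$ or one of the explicitly listed subgroups $M$ in those tables.

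\medskip

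The second step is to extract a numerical consequence. The entries of Tables~10.1--10.6 are parametrised in a uniform way: each listed subgroup $M$ has order bounded by a fixed polynomial-type expression in $q$ and the rank $r$ (for classical groups, something like $q^{O(r^2)}$ times small factors; for exceptional groups, $q^{O(1)}$). Meanwhile $|T|$ is likewise a fixed expression in $q$ and $r$. If $q_\ell$ is simultaneously a primitive prime divisor of $p^{\ell e}-1$ and divides $|M|$ for such an $M$, then — since a primitive prime divisor $q_\ell$ of $p^{\ell e}-1$ exceeds $\ell e$, and divides an order which is a product of cyclotomic-type factors $\Phi_m(p)$ with $m \le $ (a bounded multiple of the rank) — one gets an inequality of the form $\ell e \le g(r)$ for some explicit function $g$ of the rank alone, because a primitive prime divisor of $p^{\ell e}-1$ cannot divide $p^m - 1$ for $m < \ell e$. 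Hence $\ell e$ is bounded in terms of $r$, and then $q_\ell = P[\Phi_{\ell e}(p)] \le p^{\ell e} \le p^{g(r)}$, which together with $|T| = |T|(q,r)$ and $q \le q_\ell^{1/\varphi(\ell e)} \cdot(\text{const})$ (or more crudely, bounding $q$ in terms of $q_\ell$ via the fact that $q_\ell \mid |T|$ and the largest prime power dividing $|T|$ is at most $q^{O(r^2)}$) yields $|T| \le \lambda(q_\ell, r)$ for a suitable $\lambda$. Conversely, if $T$ is one of the listed subgroups' ambient groups but $M = T$ (i.e. $q_\ell \mid |T|$ with no proper-subgroup obstruction needed), we still get the bound directly by bounding $q$ in terms of $q_\ell$: the condition $q_\ell \mid |T|$ forces $q_\ell \le |T|$, and conversely $q$ is controlled because $q_\ell$ being a large primitive prime divisor bounds $\ell e$ hence $q = p^e$ once we also know the relationship between $q_\ell$ and $q$.

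\medskip

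Putting this together, the logical skeleton is: assume $q_\ell \mid |T|$. If $\ell e \le 2$ we are in the second alternative and there is nothing to prove. If $\ell e \ge 3$, then $q_\ell$ is a primitive prime divisor of $\Phi_{\ell e}(p)$ (dealing separately with the small Zsygmondy exception $\Phi_6(2)$, which only arises when $\ell e = 6$ and $p = 2$ — here $\Phi_6(2) = 3$ already divides everything, so one checks this tiny case by hand or absorbs it into $\lambda$). Being primitive, $q_\ell > \ell e$, and $q_\ell$ does not divide $|\mathrm{GL}_m(q)|$ for $m$ below $\ell e/e = \ell$; scanning Tables~10.1--10.6 one sees that a proper subgroup of $T$ with order divisible by $q_\ell$ forces $\ell e$ to be at most roughly the ``defining dimension'' $d$ or the Coxeter number, each of which is bounded by an explicit function of $r$. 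So either $\ell e \le g(r)$, whence $q \le p^{e} \le p^{\ell e} \le p^{g(r)}$ and $|T| \le |T|(p^{g(r)}, r) =: \lambda(q_\ell, r)$ after also bounding $p$ by $q_\ell$ (since $p \mid |T|$ and $q_\ell$ need not bound $p$ directly — here I would instead bound $p$ using $q_\ell \mid |T|$ and $|T| \le p^{N(r)}$ for explicit $N$, giving $p \le q_\ell$), or no proper subgroup obstruction exists and we bound $|T|$ directly in terms of $q_\ell$ and $r$ as above. The main obstacle I anticipate is the bookkeeping in the second step: making the passage from ``$q_\ell$ divides the order of a subgroup in Tables~10.1--10.6'' to an explicit bound on $\ell e$ uniform across all six tables and all families in the list preceding the statement, including correctly handling the borderline small cases (the Zsygmondy exceptions, the groups excluded in the hypotheses such as $\mathrm{PSL}_2(2), \mathrm{PSL}_2(3), \mathrm{PSU}_3(2)$, and the twisted groups ${}^2B_2, {}^2G_2, {}^2F_4$ where the relevant cyclotomic factor is $\Phi_{\ell e}$ evaluated with a twisted parameter). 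I would organise this as a case analysis by Lie type, but keep it terse since only the shape of the bound, not its optimal value, matters.
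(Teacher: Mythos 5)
Your plan brings in far more machinery than the lemma needs, and at the crucial point it runs the inequality in the wrong direction. The paper's proof of Lemma~\ref{lem:cats} never touches Theorem~\ref{thrm:4LPS} or the tables in~\cite{LPS}: since $|T|$ is a fixed function of $q$ and the rank $r$, the only thing that must be shown is that $q$ is bounded above by a function of $q_\ell$. The paper does this in two lines. Because $q_\ell \mid \Phi_{\ell e}(p) \mid p^{\ell e}-1$, the multiplicative order of $p$ modulo $q_\ell$ is $\ell e$ (for $\ell e\ge 3$ this is the primitive prime divisor property), so Fermat's little theorem gives $\ell e \le q_\ell - 1$. Hence, for fixed $q_\ell$, only the finitely many cyclotomic polynomials $\Phi_k$ with $3\le k\le q_\ell-1$ can occur, and taking the extremal Shorey–Tijdeman constants over that finite family yields a uniform bound $q\le \kappa(q_\ell)$. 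That is the whole argument; no classification of subgroups is invoked.

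The concrete gap in your proposal is the sentence claiming that from $q_\ell \mid |T|$ and $|T|\le p^{N(r)}$ one gets $p\le q_\ell$. That inference is backwards: $q_\ell\mid |T|\le p^{N(r)}$ only gives $q_\ell\le p^{N(r)}$, which bounds $q_\ell$ in terms of $p$, not $p$ in terms of $q_\ell$. The same confusion appears in the suggestion $q\le q_\ell^{1/\varphi(\ell e)}\cdot(\mathrm{const})$ — in fact $\Phi_{\ell e}(p)\approx p^{\varphi(\ell e)}$ gives $q_\ell\le p^{\varphi(\ell e)}$, again the wrong direction. The only way to bound $p$ above by a function of $q_\ell$ is Lemma~\ref{siegeltheorem} (Shorey–Tijdeman), which you mention but never actually apply in the needed direction. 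Once you do apply it, you also need $\ell e$ to be bounded in terms of $q_\ell$ so that the Shorey–Tijdeman constants (which depend on the polynomial) can be taken uniformly; you try to get such a bound from the LPS tables in terms of $r$, but that is both unnecessary and insufficient — unnecessary because $\ell e\le q_\ell-1$ already holds, and insufficient because a bound on $\ell e$ in terms of $r$ alone still leaves the Siegel constant depending on $\ell e$ without being controlled by $q_\ell$. Dropping the entire LPS/Zsigmondy case analysis and replacing the flawed step with Fermat plus Shorey–Tijdeman would repair the argument and bring it into line with the paper.
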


\begin{proof}Since $|T|$ is determined by $r$ and $q$ (see~\cite[page~xvi, Table~6]{atlas}), it suffices to show that $q$ is bounded above by a function of $q_\ell$.

Assume that $\ell e \ge 3$. Then the cyclotomic polynomial $\Phi_{\ell e}$ has degree at least $2$, and therefore possesses at least two distinct roots.  

Given $x\in\mathbb{N}$, let 
\begin{align*}
c_x'&=\max\{c_{\Phi_k}\mid 3\le k\le x-1\},\\
c_x&=\min\{c_{\Phi_k}\mid 3\le k\le x-1\},
\end{align*}
where $c_{\Phi_{k}}$ and $c_{\Phi_{k}}'$ are the constants arising from Lemma~\ref{siegeltheorem} applied with the polynomial $f=\Phi_{k}$.
Next define the function $\kappa:\mathbb{N}\to\mathbb{R}$ by
\[
\kappa(x)=\max\left\{c_{x}',\ \exp\!\big(\exp(x/c_{x})\big)\right\},
\]
From Fermat's little theorem, $\ell e\le q_\ell-1$ and hence, by Lemma~\ref{siegeltheorem}, we obtain $q\le \kappa (q_\ell)$. 
\end{proof}

\begin{lemma}\label{lem:aux2}
Let $s,a$ and $\sigma$ be positive integers, let $X$ be a set of cardinality $s$, and let $\pi_1,\ldots,\pi_\sigma$ be a family of partitions of $X$ such that each part in $\pi_i$ has cardinality at least $a$, for every $i\in \{1,\ldots,\sigma\}$. Then there exists a subset $Y\subseteq X$ with 
$|Y|\ge s(1-1/a)^\sigma$ such that $Y$ contains no part of $\pi_i$ for any $i\in \{1,\ldots,\sigma\}$.
\end{lemma}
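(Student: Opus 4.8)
The plan is to argue by induction on $\sigma$, building the set $Y$ by processing the partitions one at a time and, at each stage, deleting a single element from every part of the current partition that has so far survived. The point is that deleting elements from a set can only destroy the property of containing a given part, never create it, so the cost incurred at stage $i$ is controlled only by the parts of $\pi_i$ that are still entirely present, and these are pairwise disjoint of size at least $a$.

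In detail, for the base case $\sigma=1$ (the statement for $\sigma=0$ being vacuous with $Y=X$), pick one element from each part of $\pi_1$ and delete it from $X$; since the parts are pairwise disjoint and each has cardinality at least $a$, there are at most $s/a$ of them, so the resulting set $Y$ satisfies $|Y|\ge s-s/a=s(1-1/a)$ and contains no part of $\pi_1$. For the inductive step, apply the inductive hypothesis to $\pi_1,\ldots,\pi_{\sigma-1}$ to obtain $Y'\subseteq X$ with $|Y'|\ge s(1-1/a)^{\sigma-1}$ such that $Y'$ contains no part of $\pi_i$ for $i\le\sigma-1$. The parts of $\pi_\sigma$ that are entirely contained in $Y'$ are pairwise disjoint subsets of $Y'$, each of size at least $a$, hence there are at most $|Y'|/a$ of them; pick one element from each and delete it from $Y'$, obtaining $Y$. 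Then $|Y|\ge |Y'|(1-1/a)\ge s(1-1/a)^\sigma$.

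Finally one checks that $Y$ contains no part of any $\pi_i$: it contains no part of $\pi_\sigma$ by construction, and for $i\le\sigma-1$, any part $P$ of $\pi_i$ satisfies $P\not\subseteq Y'$ by the inductive hypothesis, so $P\not\subseteq Y$ since $Y\subseteq Y'$. There is no serious obstacle here; the only thing to be careful about is the monotonicity observation just used, which is precisely what makes the partition-by-partition deletion efficient rather than wasteful.
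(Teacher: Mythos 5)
Your proof is correct and follows essentially the same route as the paper: induction on $\sigma$, deleting one element from each part of the current partition that survives in the running set, and using the disjointness and size-at-least-$a$ property to bound the number of deletions by a $1/a$ fraction. The only (welcome) difference is that you make the monotonicity observation explicit, namely that $Y\subseteq Y'$ ensures no part of an earlier $\pi_i$ reappears, which the paper leaves implicit.
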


\begin{proof}
We argue by induction on $\sigma$.

Assume first that $\sigma=1$. Write $\pi_1=\{X_1,\ldots,X_b\}$ and choose an element $x_i\in X_i$ for each $i$. 
Set $Y=X\setminus\{x_1,\ldots,x_b\}$. Then $$|Y|=s-b\ge s-\frac{s}{a}=s(1-1/a)$$ and, by construction, $Y$ contains no part of $\pi_1$. This proves the result for $\sigma=1$.

Now assume $\sigma>1$. By the inductive hypothesis, there exists a subset $Y'\subseteq X$ with
$|Y'|\ge s(1-1/a)^{\sigma-1}$ such that $Y'$ contains no part of $\pi_i$ for every $i\in\{1,\ldots,\sigma-1\}$.  

The set $Y'$ contains at most $\lfloor |Y'|/a \rfloor$ parts of $\pi_\sigma$. By removing at most this many elements from $Y'$, we obtain a subset $Y\subseteq X$ that contains no part of $\pi_\sigma$. Hence
\begin{align*}
|Y|
&\ge |Y'|-\lfloor |Y'|/a \rfloor
\ge |Y'|-\frac{|Y'|}{a}
= |Y'|\left(1-\frac{1}{a}\right) \\
&\ge s\left(1-\frac{1}{a}\right)^{\sigma-1}\left(1-\frac{1}{a}\right)
= s\left(1-\frac{1}{a}\right)^\sigma.\qedhere
\end{align*}
\end{proof}

\section{Proof of the equivalence: Theorem~\ref{thrm:main}}\label{proof1}
In this section, we assume that Theorem~\ref{thrm:1} holds true and we prove Theorem~\ref{thrm:main}.

 Assume that Conjecture~\ref{conjecture1} holds.  
Let $G,U,U'$ be as in Conjecture~\ref{conjecturePraeger0}, and let $\Omega$ be the set of right cosets of $U$ in $G$. Then
$$\bigcup_{g\in G} U^g$$
is precisely the set of elements of $G$ that fix at least one point of $\Omega$.

If this union coincides with $\bigcup_{g\in G} U'^g$ and $|G:U'| = n$, then any clique in the derangement graph of $G$ acting on $\Omega$ has size at most $n$. Indeed, suppose that $C$ is a clique with $|C| > n$. By the pigeonhole principle, $C$ contains two elements $x,y$ lying in the same right coset of $U'$. Then $xy^{-1} \in U'$, and hence $xy^{-1}$ is conjugate to an element of $U$. Consequently, $xy^{-1}$ fixes some point of $\Omega$, contradicting the fact that $C$ is a clique.

Therefore, the derangement graph of $G$ on $\Omega$ has no clique of size $n+1$. By Conjecture~\ref{conjecture1}, we obtain $|\Omega|=|G:U| \leq F(n+1)$, which proves Conjecture~\ref{conjecturePraeger0}.

We now prove the converse.   Assume that Conjecture~\ref{conjecturePraeger0} holds true. Let $f$ be the function appearing in Conjecture~\ref{conjecturePraeger0} and let $h$ be the function appearing in Theorem~\ref{thrm:1}. Now let $f':\mathbb{N}\to\mathbb{N}$ and $h':\mathbb{N}\times\mathbb{N}\to\mathbb{N}$ be defined by
\begin{align*}
f'(n)&=\max\{\lceil f(x)\rceil\mid x\le n\}+n,\\
h'(c,n)&=\max\{\lceil h(c,x)\rceil\mid x\le n\}+n,
\end{align*}
for every $n,c\in\mathbb{N}$. Clearly, $f'$ and $h'(c,-)$ are increasing and since $f(n)\le f'(n)$ and $h(c,n)\le h'(c,n)$, replacing $f$ and $h$ if necessary, we may suppose that $f$ and $h(c,-)$ themselves are increasing.

 We define recursively a family of functions $f_i:\mathbb{N}\to\mathbb{R}$ indexed by the positive integers. We let $f_0(x)=0$ and $f_1(x)=1$, for every $x$. Now, assume that $f_{j-1}(x)$ has been defined and let 
\begin{equation}\label{functions}
f_j(c)=f_{j-1}(c)+\log_2(f(h(c,f_{j-1}(c)))!).
\end{equation} Now, let
\begin{equation*}
F(c)=h(c,f_{\lceil\log_2(c)+1\rceil}(c)).
\end{equation*}
We show that Conjecture~\ref{conjecture1} holds true with this choice of $F$.

Let $G$ be a transitive group of degree $n$ whose derangement graph has no clique of size $c$ and let $\Omega$ be the domain of $G$. Let $$\Omega=\Sigma_\ell<\Sigma_{\ell-1}<\cdots<\Sigma_1<\Sigma_0=\{\Omega\}$$
be a normal  imprimitivity series for $G$ of maximal length. Associated to this chain we have an analogous chain of normal subgroups of $G$:
$$1=G_{(\Sigma_\ell)}< G_{(\Sigma_{\ell-1})}< \cdots < G_{(\Sigma_1)}< G_{(\Sigma_0)}=G,$$
where $G_{(\Sigma_i)}$ is the kernel of the action of $G$ on $\Sigma_i$. As $\Sigma_i$ is a normal system of imprimitivity, the orbits of $G_{(\Sigma_i)}$ on $\Omega$ are precisely the elements of $\Sigma_i$.

Observe that, as the derangement graph of $G$ has no clique of size $c$ in its action on $\Omega$, neither does the derangement graph of $G$ in its action on $\Sigma_i$. Therefore,
for every $i\in \{0,\ldots,\ell\}$, from Theorem~\ref{thrm:1}, we have
\begin{equation*}
[G:G_{(\Sigma_i)}]|\le |\Sigma_i|!\le h(c,i)!.
\end{equation*}

Let $0=i_0<i_1<\cdots <i_\kappa=\ell$ be a sequence of maximal length with the property that, for each $j\in \{0,\ldots,\kappa-1\}$,
\begin{enumerate}
\item\label{cond1} $G_{(\Sigma_{i_j})}$ has a derangement $g_{i_j}$ in its action on $\Sigma_{i_{j+1}}$, and
\item\label{cond2} $G_{(\Sigma_{i_j})}$ has no derangement in its action on $\Sigma_x$ when $x<i_{j+1}$.
\end{enumerate}
For instance, since $G_{(\Sigma_0)}=G$ is transitive on $\Sigma_{x}$, for every $x>0$, from Jordan's theorem, we have $i_1=1$. Next, if $G_{(\Sigma_1)}$ has no derangement on $\Omega=\Sigma_\ell$, then $\kappa=1$ and $(i_0,i_1)=(0,1)$. Whereas, if $G_{(\Sigma_1)}$ has a derangement in its action on $\Omega=\Sigma_{\ell}$, then we choose $i_2\le \ell$ minimal with the property that $G_{(\Sigma_1)}$ has a derangement in its action on $\Sigma_{i_2}$.

Let $$C=\{g_{i_0}^{\varepsilon_0}g_{i_1}^{\varepsilon_1}\cdots g_{i_{\kappa-1}}^{\varepsilon_{\kappa-1}}\mid \varepsilon_{i_j}\in \{0,1\}, \forall j\in \{0,\ldots,\kappa-1\}\}.$$
We show that $C$ is a clique for the derangement graph of $G$ in its action on $\Omega$. Let $h=g_{i_0}^{\varepsilon_0}g_{i_1}^{\varepsilon_1}\cdots g_{i_{\kappa-1}}^{\varepsilon_{\kappa-1}}$ and $h'=g_{i_0}^{\varepsilon_0'}g_{i_1}^{\varepsilon_1'}\cdots g_{i_{\kappa-1}}^{\varepsilon_{\kappa-1}'}$ be two distinct elements of $C$. Since $h$ and $h'$ are distinct, there exists $x\in \{0,\ldots,\kappa-1\}$ with $\varepsilon_x\ne \varepsilon_x'$ and $\varepsilon_y=\varepsilon_y'$, for all $y<x$. 
 As $\varepsilon_x-\varepsilon_x'\in \{1,-1\}$, from~\eqref{cond1}, $hh'^{-1}$ is conjugate to
 $g_{i_x}^{\pm 1}$ modulo $G_{(\Sigma_{i_{x+1}})}$. By~\eqref{cond2}, $g_{i_x}^{\pm 1}$ a derangement in its action on $\Sigma_{i_{x+1}}$ and hence so is $hh'^{-1}$. Therefore, $hh'^{-1}$ is also a derangement in its action on $\Omega$.

Since the derangement graph of $G$ in its action on $\Omega$ has no cliques of size $c$ and $|C|=2^{\kappa-1}$, we get 
\begin{equation}\label{eq_1}
\kappa\le \log_2 (c)+1.
\end{equation}

We show that $i_j\le f_j(c)$, for every $j\in \{0,\ldots,\kappa\}$, where the function $f_j$ is defined in~\eqref{functions}. We argue by induction. Let $\bar\alpha\in \Omega$ and, for every $x\in \{0,\ldots,\ell\}$, let $\Delta_x\in\Sigma_x$ with $\bar\alpha\in \Delta_x$. Assume first that $j\in\{0,1\}$. Thus $i_j=j$ and we have defined $f_j(c)=j$, for every $c$. Assume now $j>1$ and $i_{j-1}\le f_{j-1}(c)$. By~\eqref{cond2}, $G_{(\Sigma_{i_{j-1}})}$ has no derangement in its action on $\Sigma_{i_j-1}$. This implies that
$$G_{(\Sigma_{i_{j-1}})}=\bigcup_{g\in G}(G_{(\Sigma_{i_{j-1}})}\cap G_{\{\Delta_{i_j-1}\}})^g,$$
because $G_{(\Sigma_{i_{j-1}})}\cap G_{\{\Delta_{i_j-1}\}}$ is 
the stabilizer in $G_{(\Sigma_{i_{j-1}})}$ of the block $\Delta_{i_{j}-1}\in \Sigma_{i_j-1}$. 
From Conjecture~\ref{conjecturePraeger0} applied with $U=G_{(\Sigma_{i_{j-1}})}$ and $U'=G_{(\Sigma_{i_{j-1}})}\cap G_{\{\Delta_{i_j-1}\}}$, we deduce
\begin{equation}\label{aux}
[G:G_{(\Sigma_{i_{j-1}})}\cap G_{\{\Delta_{i_j-1}\}}]\le f([G:G_{(\Sigma_{i_{j-1}})}]).
\end{equation}
As $G/G_{(\Sigma_{i_{j-1}})}$ is a permutation group on $\Sigma_{i_{j-1}}$ whose derangement graph has no clique of size $c$ and whose maximal length of a normal imprimitivity series is $i_{j-1}$, from Theorem~\ref{thrm:1}, we have
\begin{equation}\label{thursday}
f([G:G_{(\Sigma_{i_{j-1}})}])\le f(h(c,i_{j-1})!)\le f(h(c,f_{j-1}(c))!),
\end{equation}
where the last inequality follows from the fact that $i_{j-1}\le f_{j-1}(c)$. Observe  $\Delta_{i_j-1}\subseteq \Delta_{i_{j-1}}$. Figure~\ref{fig:blocks} illustrates this construction.
 The group $G_{(\Sigma_{i_{j-1}})}$ is transitive on $\Delta_{i_{j-1}}$, because by hypothesis our systems of imprimitivity are normal. Hence $G_{(\Sigma_{i_{j-1}})}$ is transitive on the blocks of $\Sigma_{i_{j}-1}$ contained in $\Delta_{i_{j-1}} $. The number of these blocks is $|\Delta_{i_{j-1}}|/|\Delta_{i_j-1}|$ and hence, from the orbit-stabilizer theorem, we have
$$[G_{(\Sigma_{i_{j-1}})}:G_{(\Sigma_{i_{j-1}})}\cap G_{\{\Delta_{i_j-1}\}}]=\frac{|\Delta_{i_{j-1}}|}{|\Delta_{i_j-1}|}.$$

\begin{figure}
 \centering
    \includegraphics[width=9cm]{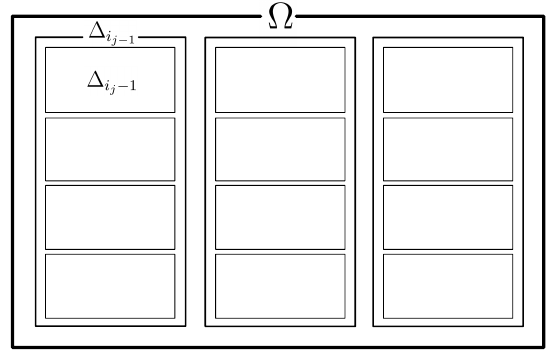}
    \caption{\textbf{Example}: Normal systems of imprimitivity $\Sigma_{i_{j-1}}$ (thick lines) and $\Sigma_{i_j-1}$ (thin lines).}
    \label{fig:blocks}
\end{figure}


We also have
$$
\frac{|\Delta_{i_{j-1}}|}{|\Delta_{i_j-1}|}=
\prod_{x=i_{j-1}}^{i_j-2}\frac{|\Delta_x|}{|\Delta_{x+1}|}\ge 
\prod_{x=i_{j-1}}^{i_j-2}2=2^{i_j-i_{j-1}-1}.$$
As $j>1$, we have $[G:G_{(\Sigma_{i_{j-1}})}]\ge 2$ and hence $$[G:
G_{(\Sigma_{i_{j-1}})}\cap G_{\{\Delta_{i_j-1}\}}
]\ge 2^{i_{j}-i_{j-1}}.$$ From~\eqref{aux} and~\eqref{thursday}, we get
$i_j-i_{j-1}\le \log_2(f(h(c,f_{j-1}(c)))!)$ and therefore
\begin{align*}
i_j&\le i_{j-1}+\log_2(f(h(c,f_{j-1}(c)))!)\\&\le f_{j-1}(c)+\log_2(f(h(c,f_{j-1}(c)))!)
=f_j(c),
\end{align*}
where the last equality follows from~\eqref{functions}.

From above, we have $\ell=i_\kappa\le f_\kappa(c)$. Moreover, from~\eqref{eq_1}, we have $\ell\le f_{\lceil\log_2(c)+1\rceil}(c)$ and hence, from
Theorem~\ref{thrm:1}, we deduce
$n=|\Omega|\le h(c,f_{\lceil\log_2(c)+1\rceil}(c))=F(c)$.

\section{Proof of Theorem~\ref{thrm:1}}
In this section we prove Theorem~\ref{thrm:1}, assuming the veracity of the following lemma: we postpone the proof of this lemma for not breaking the flow of the argument.

\begin{lemma}\label{lem:aux1}
There exists a function $\theta:\mathbb{N}\to\mathbb{N}$ with the property that,
if $T$ is a non-abelian simple group and  $M$ is a proper subgroup of $T$, then $T$ contains a subset $C$ such that
\begin{enumerate}
\item\label{eqlem:0}$|C|\ge 2$, 
\item\label{eqlem:1} $\{yx^{-1}\mid x,y\in C,x\ne y\}\cap M^\varphi=\emptyset$, for every $\varphi\in \mathrm{Aut}(T)$, 
\item\label{eqlem:2} $|T|\le \theta(|C|)$.
\end{enumerate}
\end{lemma}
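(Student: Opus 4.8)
The plan is to build, for every pair $(T,M)$, a \emph{cyclic} subgroup $\langle x\rangle\le T$ such that $\langle x\rangle\cap M^\varphi=1$ for all $\varphi\in\mathrm{Aut}(T)$, and then to take $C=\langle x\rangle$. Writing $\mathcal{D}_M(T):=T\setminus\bigcup_{\varphi\in\mathrm{Aut}(T)}M^\varphi$, this choice automatically gives condition~\ref{eqlem:1}: if $y\ne z$ lie in $\langle x\rangle$ and $yz^{-1}\in M^\varphi$, then $\langle yz^{-1}\rangle\le M^\varphi$, forcing $yz^{-1}=1$. In particular, when $x$ has prime order it suffices merely that $\langle x\rangle$ be not $\mathrm{Aut}(T)$-conjugate into $M$, since then every non-identity power of $x$ generates $\langle x\rangle$. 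Theorem~\ref{thrm:saxl} guarantees $\mathcal{D}_M(T)\ne\emptyset$, so $C=\{1,d\}$ is always available; this disposes of the sporadic groups and of the finitely many small groups of Lie type, once $\theta(2)$ is chosen large enough. For all other $T$ the task is to find $x$ of order large enough that $|T|$ is controlled by $\mathbf{o}(x)$, and I would split into the Lie-type and the alternating cases.

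For $T$ of Lie type over $\mathbb{F}_q$, $q=p^e$, of Lie rank $r$, I would work with the primitive prime divisors $q_\ell=P[\Phi_{\ell e}(p)]$ of Lemma~\ref{lem:cats}, using the classical fact that an element of order $q_\ell$ has a very restricted set of over-groups. If for some $\ell$ with $\ell e\ge 3$ and $q_\ell\mid|T|$ one has $q_\ell\nmid|M|$, then an element $x$ of order $q_\ell$ has $\langle x\rangle$ not $\mathrm{Aut}(T)$-conjugate into $M$ for order reasons, so $C=\langle x\rangle$ works; by Lemma~\ref{lem:cats} we get $|T|\le\lambda(q_\ell,r)$, and, choosing $\ell$ of size comparable to $r$ so that $q_\ell>\ell e\gtrsim r$, the rank $r$ is itself bounded in terms of $|C|$, whence a monotone majorant of $\lambda$ furnishes $\theta$. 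Otherwise $|M|$ is divisible by all the relevant primitive prime divisors, and then Theorem~\ref{thrm:4LPS} forces $M$ into the explicit, bounded list of geometric subgroups of Tables~$10.1$--$10.6$ (stabilizers of subspaces, imprimitive, field-extension, subfield and classical-type subgroups, and finitely many exceptions); for each family I would exhibit directly a primitive-prime-divisor element — or a Singer-type irreducible element when $M$ stabilizes a proper subspace, since an irreducible element lies in no subspace stabilizer — whose cyclic group is not $\mathrm{Aut}(T)$-conjugate into $M$, concluding via Lemma~\ref{lem:cats} as before. Theorem~\ref{thrm:4} provides an alternative endgame throughout: whenever the prime-order element $x$ realizing $\mathrm{mpr}^\ast(x)=\mathrm{mpr}^\ast(T)$ has $\langle x\rangle$ not $\mathrm{Aut}(T)$-conjugate into $M$, one gets $|C|=\mathbf{o}(x)>\mathrm{mpr}^\ast(T)$ and $|T|\le f_{Lie}(\mathrm{mpr}^\ast(T))\le f_{Lie}(|C|)$ immediately.

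For $T=A_n$ with $n\ge 7$ we have $\mathrm{Aut}(T)=\Sym(n)$, and $\bigcup_\varphi M^\varphi$ is precisely the set of even permutations whose cycle type occurs among the elements of $M$. I would pick (by Chebyshev's theorem, or by Lemma~\ref{siegeltheorem}) a prime $p$ close to $n$ with $p\le n-3$, say $5n/6<p\le n-3$. If $M$ contains no $p$-cycle, then for $x$ a $p$-cycle $C=\langle x\rangle$ works and $|C|=p>5n/6$, whence $|A_n|=n!/2\le n^{n}\le(2|C|)^{2|C|}$. If $M$ contains a $p$-cycle for every such $p$, then Jordan's theorem on primitive groups containing a $p$-cycle rules out $M$ primitive, a short orbit count rules out $M$ imprimitive, and so $M\le(\Sym(k)\times\Sym(n-k))\cap A_n$ with $k<n/6$; in this "almost point-stabilizer" case $\mathcal{D}_M(T)$ contains every even permutation no sub-collection of whose cycles sums to $k$, and taking $x$ to be a product of two long cycles of coprime lengths (one of them a prime in $(n/3,2n/3)$) and pruning $\langle x\rangle$ with Lemma~\ref{lem:aux2} to discard the few powers having a short cycle yields a valid $C$ of size $\gg n$. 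Finally, the primitive maximal subgroups — the only maximal subgroups of $A_n$ that can split in $\Sym(n)$ — have order at most $n^{O(\log n)}$, so $\mathcal{D}_M(T)$ is all but a negligible part of $T$, and Theorem~\ref{thrm:FPS}, applied to the quasiprimitive action of $T$ on $T/M$, yields a clique of the required size.

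The hard part will be the Lie-type analysis. First, one must run through the bounded list produced by Theorem~\ref{thrm:4LPS} and check, family by family, that a suitable prime-order primitive-prime-divisor element (or irreducible element) with cyclic group not $\mathrm{Aut}(T)$-conjugate into $M$ always exists; this is finite but delicate, since several primitive prime divisors of $|T|$ do divide $|M|$ and one must pin down which, and whether the corresponding torus element genuinely lies in a twist of $M$. Second, one has to bound the field size $q$ \emph{and} the Lie rank $r$ simultaneously in terms of $\mathbf{o}(x)=q_\ell$, which is why the index $\ell$ must be chosen of size comparable to $r$ before invoking Lemma~\ref{lem:cats}; it is at this point that the Siegel-type estimate, and hence the ineffective Lehmer-number input, enters, so that no explicit $\theta$ is produced. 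By comparison, the cleanup of the cyclic construction in the alternating "almost point-stabilizer" subcase is routine.
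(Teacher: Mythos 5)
Your overall strategy — build a cyclic subgroup $\langle x\rangle$ of primitive-prime-divisor order that trivially intersects every $M^\varphi$, invoke Theorem~\ref{thrm:4LPS} to restrict $M$ when every relevant $q_\ell$ divides $|M|$, and use Lemma~\ref{lem:cats} to bound $|T|$ — is the same engine the paper uses, and the use of Saxl's theorem for the base case $C=\{1,g\}$ is also correct. But your proposal is missing the single reduction that makes the whole proof tractable, and this omission creates a genuine gap in the alternating case.

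The paper's key reduction, which you never formulate, is: \emph{if the $\mathrm{Aut}(T)$-conjugacy class of $M$ equals its $T$-conjugacy class, then a maximum clique $C$ in the derangement graph of $T$ acting on $T/M$ automatically satisfies condition~\eqref{eqlem:1}, and Theorem~\ref{thrm:FPS} bounds $|T|$}. Once this is in place, one may assume the class splits. For $T=\mathrm{Alt}(m)$ (with $|T|>|\mathbb{M}|$, so $\mathrm{Aut}(T)=\mathrm{Sym}(m)$), the normalizer $\mathbf{N}_{\mathrm{Sym}(m)}(M)$ is not contained in $\mathrm{Alt}(m)$ whenever $M$ is intransitive or imprimitive, so the $\mathrm{Aut}(T)$- and $T$-classes coincide and those cases are already done. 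Only primitive $M$ remains, and there Bertrand plus Jordan's theorem gives a prime $p$ with $m/2<p\le m-3$ and $p\nmid|M|$, so a $p$-cycle works. You have this last ingredient, but because you lack the reduction, you try to handle the intransitive case directly, and that argument does not work: if $x$ is a product of a $q$-cycle and an $(n-q)$-cycle, then every power $x^{qj}$ has $q\ge n/3>k$ fixed points, any $k$ of which form a block stabilized by $x^{qj}$, so $x^{qj}$ lies in a $\mathrm{Sym}(n)$-conjugate of $M$. To prune $\langle x\rangle$ to a set $C$ you would need the set of pairwise \emph{differences} of exponents, not just the exponents themselves, to avoid all such bad residues — a Sidon-type constraint that Lemma~\ref{lem:aux2} (which only removes whole parts of partitions) does not address. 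Moreover, your fallback ``Theorem~\ref{thrm:FPS} applied to $T/M$'' for primitive $M$ is exactly wrong: a clique in the derangement graph of $T$ on $T/M$ only guards against $T$-conjugates of $M$, and primitive $M$ is precisely the case in which the $\mathrm{Aut}(T)$-class of $M$ can split into several $T$-classes, so~\eqref{eqlem:1} is not automatic. Finally, your treatment of the residual Lie-type cases from Tables~10.1--10.6 is too optimistic about an always-available primitive-prime-divisor element; the paper needs the class-equality reduction repeatedly here (e.g.\ for $\mathrm{PSL}_2(q)$), and for the genuinely split cases such as $\mathrm{Sp}_4(q)$, $q$ even, with $M$ of type $\mathrm{Sp}_2(q^2){:}2$ and for $\mathrm{P}\Omega_8^+(q)$ with $M\cong\Omega_7(q)$ it resorts to a counting argument inside an elementary abelian subgroup of order $q_\ell^2$ that compares the number of elements covered by the $T$-conjugates of the two (or six/three) class representatives against $|P|=q_\ell^2$; nothing of that subtlety appears in your sketch.
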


 Let $G$ be a transitive permutation group of degree $n$ with domain $\Omega$. Assume that the derangement graph of $G$ contains no clique of size $c$, and let $\ell$ be the maximum length of a normal imprimitivity series for $G$. We argue by induction on $\ell$.

If $\ell=1$, then every non-trivial normal subgroup of $G$ is transitive on $\Omega$, otherwise we contradict the maximality of $\ell$. Hence $G$ is quasiprimitive. In this case, the result follows from~\cite[Theorem~1.1]{FPS}, and we may take $h(c,1)$ to be the function denoted by $f_1$ in that theorem. 

Now assume that $\ell>1$ and that $h(c,\ell-1)$ is already defined, with the meaning given in the statement of Theorem~\ref{thrm:1}.  
Set
\begin{align}\label{def:hcell}
h(c,\ell)=\max&\left\{{c\choose 2}\,h(c,\ell-1)^2,h(c,\ell-1)\theta(c)^{\log_2(c)h(c,\ell-1)!},\right.\\\nonumber
& h(c,\ell-1)c^{h(c,\ell-1)!2^{h(c,\ell-1)}}, h(c,\ell-1)|\mathbb{M}|^{c(h(c,\ell-1)!)^2},\\\nonumber
&\left. h(c,\ell-1)(f_{Lie}((ch(c,\ell-1)!)^2))^{c(h(c,\ell-1))^2}\right\},\nonumber
\end{align}
where $\theta$ is the is the function defined in Lemma~\ref{lem:aux1}, $\mathbb{M}$ is the Monster group and $f_{Lie}$ is the function defined in Theorem~\ref{thrm:4}.

 Let
$$
\Omega=\Sigma_\ell<\Sigma_{\ell-1}<\cdots<\Sigma_1<\Sigma_0=\{\Omega\}
$$
be a normal imprimitivity series of length $\ell$, and let $K=G_{(\Sigma_{\ell-1})}$ be the kernel of the action of $G$ on $\Sigma_{\ell-1}$.

Note that $K\neq 1$ because $\Sigma_{\ell-1}$ is a non-trivial normal system of imprimitivity, and let $N$ be a minimal normal subgroup of $G$ contained in $K$. By the maximality of $\ell$, we deduce that $K$ and $N$ have the same orbits on $\Omega$. That is,
\begin{equation}\label{eq:25}
K=NK_\omega,\qquad \forall\omega\in \Omega.
\end{equation}

\smallskip

\noindent\textsc{Assume that $N$ is abelian.}

\smallskip

 Set 
$$
\mathcal{S}=\bigcup_{\omega\in \Omega}N_\omega.
$$
For each $\Delta\in \Sigma_{\ell-1}$, choose a point $\omega_\Delta\in \Delta$. Since $N$ is abelian and transitive on $\Delta$, the stabilizer $N_{\omega_\Delta}$ fixes $\Delta$ pointwise. Therefore,
$$
\mathcal{S}=\bigcup_{\Delta\in \Sigma_{\ell-1}} N_{\omega_\Delta}.
$$
This implies
\begin{align}\label{lauree}
|\mathcal{S}| &\le |\Sigma_{\ell-1}|\, |N_{\omega_\Delta}| 
               \le h(c,\ell-1)\,|N_{\omega_\Delta}|,
\end{align}
where in the last inequality we apply the inductive hypothesis to the group $G/G_{(\Sigma_{\ell-1})}$ acting on $\Sigma_{\ell-1}$.

Since $G$ has no clique of size $c$ in its action on $\Omega$, the same holds for $N$. Thus, for every $(x_1,\ldots,x_c)\in N^c$, there exist indices $i,j\in \{1,\ldots,c\}$ with $i<j$ such that $x_jx_i^{-1}$ fixes some point of $\Omega$, that is, $x_jx_i^{-1}\in \mathcal{S}$ and hence $x_j\in \mathcal{S}x_i$. Therefore, in the $c$-tuple $(x_1,\ldots,x_c)$, the $j$-th coordinate differs from the $i$-th coordinate by an element of $\mathcal{S}$. Consequently,
\begin{align*}
|N|^c \le {c\choose 2} |N|^{c-1} |\mathcal{S}|.
\end{align*}
Combining this with~\eqref{lauree}, we obtain
$$
|\Delta|=|N:N_{\omega_\Delta}|\le {c\choose 2 } h(c,\ell-1).
$$
Therefore, using~\eqref{def:hcell}, we have
$$
n=|\Omega| = |\Sigma_{\ell-1}|\,|\Delta| \le {c\choose 2} h(c,\ell-1)^2\le h(c,\ell).
$$
This completes the inductive step in the case $N$ is abelian.

\smallskip

\noindent\textsc{Assume that $N$ is non-abelian.}

\smallskip

We start by setting some notation. We have $N=T^s$, for some non-abelian simple group $T$ and for some positive integer $s$. We write $N=T_0\times \cdots \times T_{s-1}$, where $T_0,\ldots,T_{s-1}$ are the simple direct factors of $N$. As $N\unlhd K=G_{(\Sigma_{\ell-1})}$, the group $N$ is the direct product of minimal normal subgroups of $K$, that is,
$$N=N_1\times \cdots\times N_\kappa,$$
where $N_j$ is a minimal normal subgroup of $K$, for every $j\in \{1,\ldots,\kappa\}$. As $N$ is minimal normal in $G$ and $K\unlhd G$, the action by conjugation of $G$ on $\{N_1,\ldots,N_\kappa\}$ is transitive. In particular, $\kappa$ divides $s$ and we may write $s=r\kappa$, for some $r\in\mathbb{N}$. Up to relabeling the indexed set, we may assume that $N_j=T_{(j-1)r}\times T_{(j-1)r+1}\times \cdots\times T_{(j-1)r+r-1}$. Thus $N_j\cong T^r$.

As $G/K$ acts transitively by conjugation on $\{N_1,\ldots,N_\kappa\}$, we have $\kappa\le [G:K]=[G:G_{(\Sigma_{\ell-1})}]\le |\Sigma_{\ell-1}|!$ and hence
\begin{equation}\label{eq:boundkappa}
\kappa\le h(c,\ell-1)!.
\end{equation}

 We let $\pi_i:N\to T_i$ be the natural projection onto the $i$th coordinate. Let $\alpha\in \Omega$. We now distinguish two cases, depending on whether there exists $i\in \{1,\ldots,s\}$ with $\pi_i(N_\alpha)<T_i$, or $\pi_i(N_\alpha)=T_i$ for every $i$.

\smallskip

\noindent\textsc{Case 1: }$\pi_i(N_\alpha)$ is a proper subgroup of $T_i$, for some $i\in \{0,\ldots,s-1\}$.
\smallskip

Up to relabeling the index set $\{0,\ldots,\ell-1\}$,  we may suppose that $i=0$. Let $M$ be a maximal subgroup of $T_0$ with $\pi_0(N_\alpha)\le M$.

From~\eqref{eq:25}, we have $K=NK_\alpha$ and hence $K_\alpha$ acts transitively by conjugation on $\{T_0,\ldots,T_{r-1}\}$. Moreover, as $N_\alpha\unlhd K_\alpha$, we deduce that, for every $i\in \{0,\ldots,r-1\}$, there exists $\varphi_i\in\mathrm{Aut}(T)$ with $\pi_i(N_\alpha)\le M^{\varphi_i}$.

Let $\theta$ and $C$ be the function and the subset arising in Lemma~\ref{lem:aux1}. For each $i\in \{1,\ldots,\kappa\}$, let $C_i=C^r$ be the subgroup of $N_i$ consisting of the Cartesian product of $r$ copies of $C$. Then let $$C'=\mathrm{Diag}(C_1\times C_2\times\cdots\times C_\kappa)\cong C^r.$$
 The condition~\eqref{eqlem:1} in Lemma~\ref{lem:aux1}  implies that $C'$ is a clique for the derangement graph of $G$ in its action on $\Omega$. Therefore, $|C|^r=|C'|< c$. From Lemma~\ref{lem:aux1}~\eqref{eqlem:0}, we have $r\le \log_2(c)$. Moreover, from Lemma~\ref{lem:aux1}~\eqref{eqlem:2}, we have $|T|\le \theta(c)$. Thus $$|N|=|T|^{r\kappa}\le \theta(c)^{\log_2(c)\kappa}\le\theta(c)^{\log_2(c)h(c,\ell-1)!}.$$ This and~\eqref{def:hcell} imply
\begin{align*}
|\Omega|&=|\Sigma_{\ell-1}||\alpha^N|\le |\Sigma_{\ell-1}||N|\\
&\le h(c,\ell-1)\theta(c)^{\log_2(c)h(c,\ell-1)!}\le h(c,\ell).
\end{align*}
This completes the inductive step of Case~1.

\smallskip

\noindent\textsc{Case 2: }$\pi_i(N_\alpha)=T_i$, for every $i\in \{1,\ldots,s\}$.

\smallskip

Given a positive integer $a$, we denote by $\mathrm{Diag}(T^a)$ any subgroup of the direct product $T^a$ of the form
\[
\{(t^{\varphi_1},t^{\varphi_2},\ldots,t^{\varphi_a}) \mid t\in T\},
\]
where $\varphi_1,\ldots,\varphi_a\in \mathrm{Aut}(T)$. 
This notation is slightly abusive, since $\mathrm{Diag}(T^a)$ depends on the choice of the automorphisms $\varphi_1,\ldots,\varphi_a$, but this will not cause any ambiguity in what follows.

By Scott's Lemma, there exists a partition $\pi$ of the set $\{0,\ldots,s-1\}$ such that
\begin{equation}\label{eq:Nalpha}
N_\alpha=\prod_{P\in \pi}\mathrm{Diag}\left(\prod_{x\in P}T_x\right).
\end{equation}

Each element $g\in G$ induces a permutation of $\{0,\ldots,s-1\}$ defined by $i^g=i'$ whenever $T_i^g=T_{i'}$. 
Since $N_\alpha\unlhd G_\alpha$, it follows that $G_\alpha$ preserves the partition $\pi$, that is, $P^g\in \pi$ for every $P\in\pi$ and every $g\in G_\alpha$. 
As $K=K_\alpha N$ and as $N$ acts trivially on $\{0,\ldots,s-1\}$, we deduce that $\pi$ is fixed by $G_\alpha K=G_{\{\Delta\}}$, where $\Delta\in\Sigma_{\ell-1}$ is the block containing $\alpha$. 
In particular, $\pi$ depends only on $\Delta$ and not on the specific choice of $\alpha\in\Delta$. 
We denote this partition by $\pi_\Delta$. 
Similarly, for each $\Lambda\in\Sigma_{\ell-1}$ we obtain a partition $\pi_{\Lambda}$ of $\{0,\ldots,s-1\}$. 
Thus we obtain a family of at most $|\Sigma_{\ell-1}|$ partitions of $\{0,\ldots,s-1\}$, on which $G$ acts transitively. Set
\begin{eqnarray*}\label{eq:defpartitions}
\Pi=\{\pi_\Lambda\mid\Lambda\in\Sigma_{\ell-1}\}.
\end{eqnarray*}
We also let $X=\{0,\ldots,s-1\}$ and $$X_j=\{(j-1)r,(j-1)r+1,\ldots,(j-1)r+r-1\},$$ for each $j\in \{1,\ldots,\kappa\}$. Moreover, let $\pi_{\Lambda,j}$ be the partition of $X_j$ obtained by intersecting each part of $\pi_{\Lambda}$ with  $X_j$ and keeping only the nonempty intersections. Since $K$ preserves $\pi_{\Lambda}$ and acts transitively on $X_j$, the partition $\pi_{\Lambda,j}$ must be uniform, that is, all parts of $\pi_{\Lambda,j}$ have the same cardinality. To clarify these definitions, see the example in Figure \ref{fig:ex_partition}.
\begin{figure}[H]
    \centering
    \includegraphics[width=12cm]{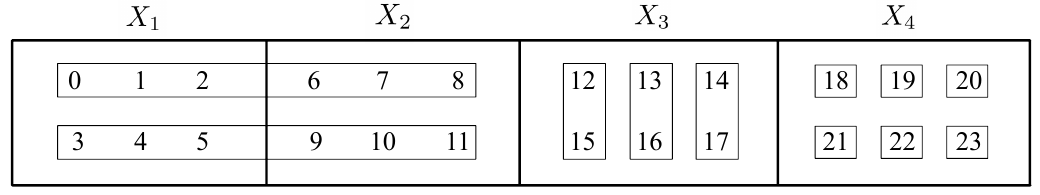}
    \caption{\footnotesize{\textbf{Example:} Here, $\kappa=4$, $r=6$, $\pi_{\Delta}$ is the partition of $\{0,\dots,23\}$ represented in the picture above. \[
\begin{aligned}
 & \pi_{\Delta,1}=\{\{0,1,2\},\{3,4,5\}\},
     & \pi_{\Delta,3}&=\{\{12,15\},\{13,16\},\{14,17\}\},\\
 & \pi_{\Delta,2}=\{\{6,7,8\},\{9,10,11\}\}, 
     & \pi_{\Delta,4}&=\{\{i\}\mid 18 \le i \le 23\}.
\end{aligned}
\]}}
    \label{fig:ex_partition}
\end{figure}
\noindent\textsc{Subcase 2A: }There exists $j\in \{1,\ldots,\kappa\}$, and $P\in \pi_\Delta$ with $|X_j\cap P|\ge 2$.

\smallskip 

 As  $G$ acts transitively on $\Pi$, for every $\pi_\Lambda\in \Pi$, there exists $j\in \{1,\ldots,\kappa\}$ and $P\in \pi_\Lambda$ with $|X_j\cap P|\ge 2$, that is,  $\pi_{\Lambda,j}$ has parts of size at least 2. And similarly, as  $G$ acts transitively on $\{X_1,\ldots,X_{\kappa}\}$, we deduce that for every $j\in \{1,\ldots,\kappa\}$, there exists $\pi_\Lambda \in \Pi$ and $P\in \pi_\Lambda$ with $|X_j\cap P|\ge 2$.

For $j\in \{1,\ldots,\kappa\}$, let $$\Pi_j=\{\pi_\Lambda\in\Pi\mid \hbox{there exists }P\in \pi_\Lambda \hbox{ with }|X_j\cap P|\ge 2\}.$$
From the previous paragraph, $\Pi=\bigcup_j \Pi_j$ and $G$ acts transitively on the set $\{\Pi_j\mid j\in \{1,\ldots,\kappa\}\}$.

Now, $\{\pi_{\Lambda,j}\mid \pi_\Lambda\in \Pi_j\}$ is a family of non-empty uniform partitions of $X_j$ where each part in $\pi_{\Lambda,j}$ has cardinality at least $2$. Therefore, from Lemma~\ref{lem:aux2}, there exists a subset $Y_j$ of $X_j$ of cardinality at least $r/2^{|\Sigma_{\ell-1}|}$ such that no part of $\pi_{\Lambda,j}$ is contained in $Y_j$, for every $\pi_{\Lambda,j}$. Moreover, as $G$ acts transitively on $\{\Pi_j\mid j\in \{1,\ldots,\kappa\}\}$, we may choose $Y_1,\ldots,Y_\kappa$ such that, for every $j,j'\in \{1,\ldots,\kappa\}$, there exists $g\in G$ with $Y_{j}^g=Y_{j'}$, in particular, $|Y_j|=|Y_{j'}|$ for every $j,j'\in \{1,\ldots,\kappa\}$.  For every $j\in \{2,\ldots,\kappa\}$, let $f_j:Y_1\to Y_j$ be a bijection.

Let $C$ be the subset of $N$ consisting of all elements $g=(t_i)_i$ such that
\begin{equation}\label{cases}
t_i=
\begin{cases}
1&\textrm{when }i\in X\setminus(Y_1\cup\cdots\cup Y_\kappa),\\
t_{f_j^{-1}(i)}&\textrm{when }i\in Y_j, j\ge 2.
\end{cases}
\end{equation}
Clearly, $C$ is a subgroup of $N$ and $C\cong T^{|Y_1|}$ because the elements of $C$ are uniquely determined by the coordinates in $Y_1$.
We claim that $C$ is a clique for the derangement graph of $G$ in its action on $\Omega$. Let $g\in C$ and write $g=(t_i)_i$, with $t_i\in T_i$. Let $Z=\{i\in \{0,\ldots,s-1\}\mid t_i=1\}$. If $g$ fixes some element, $\beta$ say, of $\Omega$, then from the description in~\eqref{eq:Nalpha} of the point stabilizers in $N$, we deduce that $Z$ is a union of parts of $\pi_\Lambda$, where $\Lambda$ is the element of $\Sigma_{\ell-1}$ with $\beta\in \Lambda$. As $\Pi=\bigcup_j\Pi_j$, $\pi_\Lambda\in \Pi_j$, for some $j\in \{1,\ldots,\kappa\}$. In particular, $Z\cap X_j$ is a union of parts of $\pi_{\Lambda,j}$. By~\eqref{cases}, $Z\cap X_j\supseteq X_j\setminus Y_j$ and by construction $Y_j$ contains no parts of $\pi_{\Lambda,j}$. Therefore, we have $Z\cap X_j=X_j$, that is, $X_j\subseteq Z$. In other words, $g$ is constantly equal to $1$ on the coordinates labeled by the elements in $X_j$. Now,~\eqref{cases} implies $g=1$. This has shown that $C$ is a clique.

Since, by hypothesis, there are no cliques of cardinality greater than $c$, we get
\[
|N|^{\frac{1}{\kappa 2^{|\Sigma_{\ell-1}|}}}=|T|^{\frac{r}{2^{|\Sigma_{\ell-1}|}}}
\leq |T|^{|Y_1|} = |C|
\leq c.
\]
From~\eqref{eq:boundkappa}, it follows that
\begin{align*}
|N|&\leq c^{\kappa 2^{\,h(c,\ell-1)}}\le  c^{h(c,\ell-1)!2^{h(c,\ell-1)}}.
\end{align*}
Consequently,
\[
|\Omega|
=|\Sigma_{\ell-1}||\alpha^N|
\leq |\Sigma_{\ell-1}||N|
\leq h(c,\ell-1)\,c^{h(c,\ell-1)!2^{h(c,\ell-1)}}
\leq h(c,\ell).
\]
This completes the inductive step in Subcase~2A.

\smallskip

\noindent\textsc{Subcase 2B: }For every $j\in \{1,\ldots,\kappa\}$ and $P\in \pi_\Delta$, we have $|X_j\cap P|\le 1$.

\smallskip 
We claim that $r\le \kappa c$ and hence, from~\eqref{eq:boundkappa},
\begin{equation}\label{eq:boundr}
r\le ch(c,\ell-1)!.
\end{equation}
We argue by contradiction and suppose that $r>\kappa c$. Let $x\in T$ with $x\ne 1$. Using $x$, we construct $c$ elements of $N$ forming a clique of size $c$, leading to a contradiction.

For each $j\in \{1,\ldots,\kappa\}$, let 
$X_j=X_{j,1}\cup \cdots \cup X_{j,c}$ 
be a partition of $X_j$, where 
$|X_{j,1}|=\cdots =|X_{j,c-1}|=\kappa$ and 
$|X_{j,c}|=r-(c-1)\kappa$. 
Observe that $|X_{j,c}|> \kappa$.

For each $\ell\in \{1,\ldots,c\}$ and $j\in \{1,\ldots,\kappa\}$, let $n_{j,\ell}\in N_{j}$ be an element which is identically equal to $1$ on the coordinates labelled by the elements of $X_j\setminus X_{j,\ell}$, and on the coordinates labelled by the elements of $X_{j,\ell}$ has exactly $j$ coordinates equal to $x$, the remaining coordinates being equal to $1$.  
Now define  
$g_\ell=n_{1,\ell}\cdots n_{\kappa,\ell}$  
and let $C=\{g_1,\ldots,g_c\}$. Since the construction of these elements is quite involved, before proceeding with the proof, we give an explicit example.

\begin{example}
Consider the set
\begin{align*}
X_1=&\{0,1,\dots,r-1\}\\
=&X_{1,1}\cup X_{1,2} \cup \cdots \cup X_{1,c}\\
=&\{0,1,\dots,\kappa-1\}\cup \{\kappa,\dots,2k-1\}\cup \cdots\cup \{(c-1)\kappa,\dots,r-1\}.
\end{align*}
Then, we define the element 
$$
n_{1,1}=
\bigl(
\underbrace{x,1,\dots,1}_{X_{1,1}}
\mid
\underbrace{1,\dots,1}_{X_{1,2}}
\mid
\underbrace{1,\dots,1}_{X_{1,3}}
\mid
\cdots
\mid
\underbrace{1,\dots,1}_{X_{1,c}}
\bigr) \in N_1,
$$
where the verticals bars partition the coordinates of $N_1$ in the sets $X_{1,\ell}$ defined as above.
Partitioning in the same way all the other sets $X_2,\dots,X_{\kappa}$, we can define  the elements $n_{j,1}\in N_j$ as
\begin{align*}
    n_{1,1}= 
\bigl(x,1,\dots,1&\mid1,\dots,1\mid 1,\dots,1 \mid \cdots \mid 1,\dots,1\bigr),\\
    n_{2,1}=
\bigl(x,x,1,\dots,1 &\mid 1,\dots,1 \mid
1,\dots,1 \mid \cdots \mid 1,\dots,1 \bigr),\\
 n_{3,1}= 
\bigl( x,x,x,1\dots,1 &\mid 1,\dots,1 \mid
1,\dots,1 \mid \cdots \mid 1,\dots,1 \bigr),\\
&\cdots
\end{align*}
Let now $n_{j,2}\in N_j$ be defined as
\begin{align*}
    n_{1,2}= 
\bigl(\underbrace{1,\dots,1}_{X_{1,1}}
\mid
\underbrace{x,1,\dots,1}_{X_{1,2}}
&\mid
\underbrace{1,\dots,1}_{X_{1,3}}
\mid
\cdots
\mid
\underbrace{1,\dots,1}_{X_{1,c}}
\bigr),\\
    n_{2,2}=
\bigl(
1,\dots,1
\mid
x,x,1,\dots,1
&\mid
1,\dots,1
\mid
\cdots
\mid
1,\dots,1
\bigr),\\
 n_{3,2}= 
\bigl(
1,\dots,1
\mid
x,x,x,1,\dots,1
&\mid
1,\dots,1
\mid
\cdots
\mid
1,\dots,1
\bigr) ,\\
&\cdots
\end{align*}
Then, for example, 
\[
\begin{aligned}
g_1
= {}(& 
\underbrace{x,1,\dots,1 \mid 1,1,\dots,1\mid \cdots \mid 1,\dots,1}_{n_{1,1}},\,
n_{2,1},\, \dots,\, n_{k,1}
),\\
g_2
= {}(& 
\underbrace{1,1,\dots,1 \mid x,1, \dots, 1 \mid \cdots \mid 1,\dots,1}_{n_{1,2}},\,
n_{2,2},\, \dots,\, n_{k,2}
).
\end{aligned}
\]
\end{example}
We claim that $C$ is a clique in the derangement graph of $G$ in its action on $\Omega$. 
For every $i,j\in \{1,\ldots,c\}$ with $i<j$, the element $g=g_jg_i^{-1}$ has, by construction, $2$ entries different from $1$ on the coordinates labelled by the elements of $X_1$, $4$ entries different from $1$ on the coordinates labelled by the elements of $X_2$, and, in general, $2y$ entries different from $1$ on the coordinates labelled by the elements of $X_y$, for every $y\in \{1,\ldots,\kappa\}$.

Suppose that $g$ fixes some element $\beta\in \Omega$. Then, by~\eqref{eq:Nalpha}, the set of coordinates on which $g$ is equal to $1$ is a union of parts of $\pi_\Lambda$, where $\Lambda\in \Sigma_{\ell-1}$ and $\beta\in \Lambda$. Since $N$ acts faithfully on $\Omega$, the partition $\pi_\Lambda$ is not trivial, and hence there exists $P\in \pi_\Lambda$ with $|P|\ge 2$. 

Since $|X_j\cap P|\le 1$ for every $j\in \{1,\ldots,\kappa\}$, there exist two distinct indices $j,j'\in \{1,\ldots,\kappa\}$ such that 
$|X_j\cap P|=|X_{j'}\cap P|=1$.

As $K$ acts transitively on $X_j$ and on $X_{j'}$, and as $K$ fixes $\pi_\Lambda$, we deduce that every part of $\pi_\Lambda$ intersecting $X_j$ in one point also intersects $X_{j'}$ in one point. Therefore, the elements of $N_\beta$ have the same number of coordinates equal to $1$ on the coordinates labelled by the elements of $X_j$ as on those labelled by the elements of $X_{j'}$. 

However, as observed above, $g$ has a different number of coordinates equal to $1$ on these two sets. Hence $g$ does not fix $\beta$. This establishes~\eqref{eq:boundr}.

Now we divide our proof depending on the simple group $T$. Assume that $T$ is a sporadic simple group and let $\mathbb{M}$ denote the Monster group. Then 
$$|N|\le |T|^{r\kappa}\le |\mathbb{M}|^{c(h(c,\ell-1)!)^2}$$
and hence
\[
|\Omega|
=|\Sigma_{\ell-1}||\alpha^N|
\leq |\Sigma_{\ell-1}||N|
\leq h(c,\ell-1)|\mathbb{M}|^{c(h(c,\ell-1)!)^2}
\leq h(c,\ell).
\]
This completes the inductive step in Subcase~2B when $T$ is a sporadic simple group.

Assume that $T$ is a simple group of Lie type. We claim that 
\begin{equation}\label{bound:mpr}
\mathrm{mpr}^\ast(T)<sc.
\end{equation}
We argue by contradiction, and suppose that $\mathrm{mpr}^\ast(T)\ge sc$. Let $x\in T$ be an element of prime order $p$ such that $\mathrm{mpr}^\ast(x)=\mathrm{mpr}^\ast(T)$. 
By definition, the cyclic group $\langle x\rangle$ contains $\mathrm{mpr}^\ast(T)$ elements lying in pairwise distinct $\mathrm{Aut}(T)$-classes. Let $x^{e_1},\ldots,x^{e_s}$ be $s$ of these elements, and let 
$g=(t_i)_i\in N$ with $t_i=x^{e_i}$. 
Since all coordinates of $g$ belong to distinct $\mathrm{Aut}(T)$-classes, it follows from~\eqref{eq:Nalpha} that $g$ fixes no point of $\Omega$. As $g$ has prime order, $C=\langle g\rangle$ is a clique in the derangement graph of $G$ in its action on $\Omega$. Hence $p=|C|<c$. However, this yields a contradiction, since 
$p-1 \ge \mathrm{mpr}^\ast(x)\ge sc\ge c$.

From~\eqref{eq:boundkappa},~\eqref{eq:boundr},~\eqref{bound:mpr} and Theorem~\ref{thrm:4}, we obtain
$$|T|\le f_{\mathrm{Lie}}(sc)\le f_{\mathrm{Lie}}((ch(c,\ell-1)!)^2).$$
Therefore,
$$|\Omega|\le h(c,\ell-1)\,\bigl(f_{\mathrm{Lie}}((ch(c,\ell-1)!)^2)\bigr)^{c(h(c,\ell-1))^2}\le h(c,\ell).$$

Finally, assume that $T=\mathrm{Alt}(m)$ is the alternating group of degree $m\ge 5$. We claim that
\begin{equation}\label{eq:alt}
m<2^sc.
\end{equation}
We argue by contradiction, and suppose that $m\ge 2^sc$. By Bertrand's postulate, for every $i\in \{1,\ldots,s\}$ there exists a prime $p_i$ in the open interval $(2^ic,2^{i+1}c)\subseteq \mathbb{R}$. 

Now let $g=(t_i)_i\in N$, where each $t_i$ is an element of $T$ of prime order $p_i$, and let
$C=\{g^{\ell}\mid \ell\in \{0,\ldots,p_1-1\}\}$.
For every $i,j\in \{0,\ldots,p_1-1\}$ with $j>i$, the element $g^{j}g^{-i}=g^{j-i}$ fixes no point of $\Omega$, since its $s$ coordinates lie in pairwise distinct $\mathrm{Aut}(T)$-classes. Therefore, $C$ is a clique for the derangement graph of $G$ in its action on $\Omega$. Hence
$c>|C|\ge p_1>c$,
which is a contradiction. This proves~\eqref{eq:alt}.

From~\eqref{eq:alt} we obtain $|T|\le (2^sc)!$, and hence, using~\eqref{eq:boundkappa} and~\eqref{eq:boundr}, we deduce as before that
\begin{align*}
|\Omega|&\le |T|^s|\Sigma_{\ell-1}|
\le h(c,\ell-1)((2^sc)!)^s\\
&\le h(c,\ell-1)((2^{c(h(c,\ell-1)!)^2}c)!)^{c(h(c,\ell-1))^2}
\le h(c,\ell).
\end{align*}
This completes the inductive step in Subcase~2B.

\section{Proof of Lemma~\ref{lem:aux1}}\label{sec:lemma}
The proof of Lemma~\ref{lem:aux1} draws on several ideas from~\cite{FPS} together with Theorem~\ref{thrm:4LPS}.  
Specifically, Theorem~\ref{thrm:4LPS} provides, for each simple group of Lie type $T$ and each maximal subgroup $M$, three explicitly described primes with the property that, aside from a list of exceptional 
cases (in~\cite[Tables~$10.1-10.6$]{LPS}), at least one of these primes does not divide $|M|$.  
In most instances (though not always), these primes are of the form $q_\ell$, where the parameters $q$ and $q_\ell$ are defined as in Section~\ref{sec:auxiliary}, recall $q_{\ell}=P[\Phi_{\ell e}(p)]$.

Note that the set $C$  in the statement of Lemma~\ref{lem:aux1} has a combinatorial interpretation:  for every $\varphi\in\mathrm{Aut}(T)$, $C$ is a clique in the derangement graph of $T$ in its action on the right cosets of $M^\varphi$.

\begin{proof}Replacing $M$ if necessary, we may assume that $M$ is a maximal subgroup of $T$. From Theorem~\ref{thrm:saxl}, there exists $g \in T \setminus \bigcup_{\varphi \in \mathrm{Aut}(T)} M^\varphi$.  
Let $C=\{1,g\}$. Clearly $C$ satisfies~\eqref{eqlem:0} and~\eqref{eqlem:1}.  
This shows that we can always choose a set $C$ fulfilling the first two requirements, and therefore, for the remainder of the proof, it suffices to show that one may select $C$ so that $|T|$ is bounded above by a function of $|C|$. We use the CFSG. 

\smallskip

\noindent\textsc{Assume  $|T|\le |\mathbb{M}|$,} where $\mathbb{M}$ denotes the Monster.  
\smallskip

In this case,~\eqref{eqlem:2} is satisfied provided we choose $\theta$ so that $\theta(2)\ge |\mathbb{M}|$.  
Thus, for the remainder of the proof, we may assume that $|T|>|\mathbb{M}|$.

\smallskip
\noindent\textsc{Assume the $\mathrm{Aut}(T)$-conjugacy class of $M$ equals the $T$-conjugacy class of $M$.}

\smallskip

Let $C$ be a clique of maximum size in the derangement graph of $T$ acting on the cosets of $M$.  
Because the $\mathrm{Aut}(T)$–conjugacy class of $M$ coincides with its $T$–conjugacy class, $C$ satisfies~\eqref{eqlem:1}.  
Moreover, by Theorem~\ref{thrm:FPS}, $C$ also satisfies~\eqref{eqlem:2} provided we choose $\theta$ such that $\theta(|C|)\ge f_1(|C|+1)!$.  
Therefore, for the remainder of the proof we may suppose that the $\mathrm{Aut}(T)$-conjugacy class of $M$ splits into at least two $T$-conjugacy classes.

\smallskip

\noindent\textsc{Assume $T$ is the alternating group $\mathrm{Alt}(m)$}, for some $m\ge 5$.

\smallskip
Since $|T|=m!/2>|\mathbb{M}|$, we have $m\neq 6$ and therefore $\mathrm{Aut}(T)=\mathrm{Sym}(m)$.  
Assume first that $M$ is either intransitive or imprimitive on $\{1,\ldots,m\}$.  
Thus $M$ is one of the following:
\begin{itemize}
    \item $M=\mathrm{Alt}(m)\cap (\mathrm{Sym}(k)\times\mathrm{Sym}(m-k))$ for some $k$ with $1\le k<m/2$, or
    \item $M=\mathrm{Alt}(m)\cap \mathrm{Sym}(k)\mathrm{wr}\mathrm{Sym}(m/k)$ for some divisor $k$ of $m$ with $1<k<m$.
\end{itemize}
In either case, ${\bf N}_{\mathrm{Sym}(m)}(M)\nleq \mathrm{Alt}(m)$. Because ${\bf N}_{\mathrm{Sym}(m)}(M)\nleq \mathrm{Alt}(m)$, the $\mathrm{Aut}(T)$–conjugacy class of $M$ coincides with its $T$–conjugacy class, contradicting our earlier assumption.

Now assume that $M$ is primitive on $\{1,\ldots,m\}$.  
Since $m\ge 8$, Bertrand's postulate~\cite[p.~498]{HW} guarantees a prime $p$ with $m/2<p\le m-3$.  
If $p$ divides $|M|$, then $M$ contains a $p$–cycle in its action on $\{1,\ldots,m\}$; by Jordan's theorem~\cite[Theorem~3.3E]{dixon}, this forces $M=\mathrm{Alt}(m)$, contradicting $M<T$.  
Thus $p$ does not divide $|M|$.  
If we let $C$ be a cyclic subgroup of $T=\mathrm{Alt}(m)$ of order $p$, then $C$ satisfies~\eqref{eqlem:0} and~\eqref{eqlem:1}, and it satisfies~\eqref{eqlem:2} provided $\theta$ is chosen so that $\theta(p)\ge m!/2$, for every prime $p$ with $m/2<p\le m-3$.

\smallskip

\noindent\textsc{Assume $T$ is a simple group of Lie type.}

\smallskip
 Let $q$ be the prime power, depending on the type of $T$, defined before Lemma~\ref{lem:cats}. Thus $q=p^{e}$ for some prime $p$ and some positive integer $e$. For each $\ell\in\mathbb{N}$, set $q_{\ell}=P[\Phi_{\ell e}(p)]$.  

Since $|T|>|\mathbb{M}|$, we may discard from consideration all groups $T$ listed in~\cite[Table~10.4]{LPS}, as well as those appearing in~\cite[Table~10.1, 10.2, 10.3, 10.5]{LPS} whose order is at most $|\mathbb{M}|$.  
Furthermore, every simple group occurring in~\cite[Table~10.2]{LPS} also appears in~\cite[Table~10.1]{LPS} (the two tables differ only in the choice of primes), so we may exclude the groups from~\cite[Table~10.2]{LPS} as well.

Assume first that every prime listed in the second column of~\cite[Tables~10.1, 10.3, 10.5]{LPS} is of the form $q_{\ell}$ for some~$\ell$. Suppose further that no option for $M$ appears in the fifth column of these tables. Then $|M|$ is not divisible by $q_{\ell}$ for some~$\ell$. Let $C$ be a cyclic subgroup of $T$ of order $q_{\ell}$. Then $C$ satisfies~\eqref{eqlem:0} and~\eqref{eqlem:1}. 

Consulting~\cite[Tables~10.1, 10.3, 10.5]{LPS}, we observe that in each case the Lie rank of $T$ is bounded above by a function of~$\ell$, and since $\ell$ divides $q_{\ell}-1$, this bound is ultimately a function of $q_{\ell}$. Therefore, by Lemma~\ref{lem:cats}, the subgroup $C$ also satisfies~\eqref{eqlem:2}.

Now assume that not all primes in the second column of~\cite[Tables~10.1, 10.3, 10.5]{LPS} are of the form $q_{\ell}$, or that they are all of this form but some option for $M$ appears in the fifth column. Then one of the following holds:
\begin{enumerate}
\item\label{case1} $T=\mathrm{PSL}_d(q)$ with $d\le 4$,
\item\label{case2} $T=\mathrm{PSU}_d(q)$ with $d\le 4$,
\item\label{case3} $T=\mathrm{PSp}_4(q)$,
\item\label{case4} $T={}^2B_2(2^{2e+1})$ and $q_4$ divides $|M|$,
\item\label{case5} $T={}^2G_2(3^{2e+1})$ and $q_6$ divides $|M|$,
\item\label{case6} $T=G_2(q)$ with $q>2$, $M\cong\mathrm{PSL}_2(13)$, and $\{q_3,q_6\}=\{7,13\}$,
\item\label{case7} $(T,M)$ appears in Table~\ref{table:our}.
\end{enumerate}

\begin{table}[!ht]
\begin{tabular}{cccc}\hline
Line&$T$ & Conditions& Possible $M$\\\hline
1&$\mathrm{PSp}_{2m}(q)$&$m\ge 4$ even, $q$ even&$\Omega_{2m}^-(q)\unlhd M$\\
2&$\mathrm{P}\Omega_{2m+1}(q)$&$m=4k\ge 8$&$\Omega_{2m}^-(q)\unlhd M$,\\
3&                            &           &$2^{2m}\cdot A_{2m+1}\unlhd M$,\\
4&                            &           &$A_{2m+\delta}\unlhd M$ $(1\le \delta\le 3)$\\
5&$\mathrm{P}\Omega_{2m}^-(q)$&$m=4k\ge 8$&$A_{2m+\delta}\unlhd M$ $(1\le \delta\le 2) $\\
6&$\mathrm{P}\Omega_{2m}^+(q)$&$m$ even &$\Omega_{2m-1}(q)\unlhd M$\\\hline                           
\end{tabular}
\caption{Exceptional cases arising from the tables in~\cite{LPS}}\label{table:our}
\end{table}
We look at each case in turn. Observe that in Cases~\eqref{case1}--\eqref{case6}, since the Lie rank is bounded above by $4$, we only need to bound $q$ above with a function of $|C|$. 

Assume that~\eqref{case1} occurs with $d=2$.  
Let $r = P[p^{2e}-1]$.  
By Lemma~\ref{siegeltheorem} applied to the polynomial $x^{2e}-1$, there exists a constant $a$ such that whenever $q \ge a$, we have $r>5$.  
If $q \le a$, then $|T| \le a^3$, and hence the lemma follows immediately, since in this case $|T|$ is bounded above by an absolute constant.  
Thus we may assume $q \ge a$.
If $r$ is relatively prime to $|M|$, then the desired conclusion follows by taking a Sylow $r$-subgroup $C$ of $T$ and applying Lemma~\ref{siegeltheorem}.  
Suppose instead that $r$ divides $|M|$.  
Consulting the list of maximal subgroups of $T$ in~\cite[Tables~8.1-8.2]{bray}, and using the fact that $r>5$, we obtain the following possibilities:
\begin{itemize}
\item if $r \mid q-1$, then either $M \cong E_q : (q-1)/\gcd(q-1,2)$ or $M \cong D_{2(q-1)/\gcd(q-1,2)}$;
\item if $r \mid q+1$, then $M \cong D_{2(q+1)/\gcd(q-1,2)}$.
\end{itemize}
In every case,~\cite[Table~8.1, Column~Stab]{bray} shows that the $T$-conjugacy class of $M$ coincides with its $\mathrm{Aut}(T)$-conjugacy class.  
Therefore this situation falls under a case already eliminated by our earlier considerations, and may be disregarded.

Assume that either~\eqref{case1} occurs with $d\in \{3,4\}$, or~\eqref{case2} occurs. Let $\delta=1$ when $T=\mathrm{PSL}_d(q)$ and let $\delta=2$ when $T=\mathrm{PSU}_d(q)$. Let $q_{3\delta} = P[\Phi_{3\delta e}(p)]$. 
By Lemma~\ref{siegeltheorem} applied to the polynomial $\Phi_{3\delta e}(x)$, there exists a constant $a$ such that whenever $q \ge a$, we have $q_{3\delta}>7$.  
If $q \le a$, then $|T| \le a^{16\delta }$, and hence the lemma follows immediately, since in this case $|T|$ is bounded above by an absolute constant.  
Thus we may assume $q \ge a$.
If $q_{3\delta}$ is relatively prime to $|M|$, then the desired conclusion follows by taking a Sylow $q_{3\delta}$-subgroup $C$ of $T$ and applying Lemma~\ref{lem:cats}.  
Suppose instead that $q_{3\delta}$ divides $|M|$.  
Consulting the list of maximal subgroups of $T$ in~\cite[Tables~8.1 and~8.2]{bray}, and using the fact that $q_{3\delta}>7$ (to exclude the subgroups in the Aschbacher class $\mathcal{S}$), we obtain the following possibilities:
\begin{itemize}
\item $T=\mathrm{PSL}_3(q)$, $M\in\mathcal{C}_3$ is of type $(q^2+q+1):3$,
\item $T=\mathrm{PSU}_3(q)$, $M\in\mathcal{C}_3$ is of type $(q^2-q+1):3$,
\item $T=\mathrm{PSL}_4(q)$, $M\in\mathcal{C}_1$ is of type $E_q^3:\mathrm{GL}_3(q)$,
\item $T=\mathrm{PSU}_4(q)$, $M\in\mathcal{C}_1$ is of type $\mathrm{GU}_3(q)$.
\end{itemize}
Except when $T=\mathrm{PSL}_4(q)$,~\cite[Tables~8.3--8.11, Column~Stab]{bray} shows that the $T$-conjugacy class of $M$ coincides with its $\mathrm{Aut}(T)$-conjugacy class.  
Therefore this situation falls under a case already eliminated by our earlier considerations, and may be disregarded. Finally, when $T=\mathrm{PSL}_4(q)$ and $M$ is of type $E_q^3:\mathrm{GL}_3(q)$, we may repeat this argument replacing $q_{3\delta}$ with $q_{4}=P[\Phi_{4e}(p)]$.

Assume that~\eqref{case3} occurs.   If $|M|$ is relatively prime to $q_4$, then the conclusion follows immediately from Lemma~\ref{lem:cats}.  
Thus we may assume that $q_4$ divides $|M|$.  
If $q_4\le 7$, then Lemma~\ref{siegeltheorem} shows that $q$ is bounded above by an absolute constant, and therefore so is $|T|$.  
Hence we may further assume $q_4>7$.  
Consulting~\cite[Tables~8.12--8.15]{bray}, we see that in this situation either $M\in\mathcal{C}_3$ is of type $\mathrm{Sp}_2(q^2):2$, or 
$M\cong {}^2B_2(q)$,
where in the latter case $q$ is even and $e\ge 3$ is odd.  
If $q$ is odd, or if $q$ is even and $M\cong {}^2B_2(q)$, then from~\cite[Tables~8.12 and~8.14, Column~Stab]{bray} we deduce that the $\mathrm{Aut}(T)$–conjugacy class of $M$ coincides with its $T$–conjugacy class.  This contradicts one of our assumptions. Assume now that $T=\mathrm{Sp}_4(q)$, $q$ is even and $M\in\mathcal{C}_3$ is of type $\mathrm{Sp}_2(q^2):2$.  
According to~\cite[Table~8.14]{bray}, the $\mathrm{Aut}(T)$–conjugacy class of $M$ splits into two $T$–conjugacy classes, with representatives $M_1$ and $M_2$, where  
$M_1=\mathrm{SO}_4^-(q)$ lies in the Aschbacher class $\mathcal{C}_8$, and  
$M_2=\mathrm{Sp}_2(q^2):2$ lies in the Aschbacher class $\mathcal{C}_3$.  
These two $T$–classes are fused by a graph automorphism of $\mathrm{Sp}_4(q)$.
Let $V=\mathbb{F}_q^4$ be the natural module of $T$, equipped with the symplectic form defined by the matrix
\[
\begin{pmatrix}
0 & 1 & 0 & 0\\
1 & 0 & 0 & 0\\
0 & 0 & 0 & 1\\
0 & 0 & 1 & 0
\end{pmatrix}.
\]
Choose $x\in \mathrm{SL}_2(q)$ of order $q_2$ (where as usual $q_2=P[\Phi_{2e}(2)]$).  
Then
\[
P=\left\{
\begin{pmatrix}
a & 0\\
0 & b
\end{pmatrix}
\;\middle|\; a,b\in \langle x\rangle
\right\}
\]
is a subgroup of $T$ of order $q_2^2$. Let $Q$ be a Sylow $q_2$-subgroup of $T$ containing $P$. As $|T|=q^4(q^2-1)^2(q^2+1)$, we see that $P$ is the direct product of two cyclic groups of order $q_2^t$ (where $q_2^t$ is the largest power of $q_2$ dividing $q^2-1$) and $P$ is the subgroup $\Omega_1(Q)$ of $P$ consisting of the elements having order at most $q_2$.

Every element of $M_1=\mathrm{SO}_4^-(q)$ of order $q_2$ has eigenvalue $1$, and hence its Jordan form consists of two $1\times 1$ blocks and a single $2\times 2$ block.  
From this it follows that the $T$–conjugates of $M_1$ cover exactly $2(q_2-1)+1$ elements of $P$, that is,
\[
\left|\bigcup_{g\in T} \left(P\cap M_1^g\right)\right| = 2q_2 - 1.
\]
Let $\alpha$ be an automorphism of $T$ with $M_1^\alpha=M_2$. Now, $Q^\alpha\le T^\alpha=T$ and hence $Q^\alpha$ is a Sylow $q_2$-subgroup of $T$. By Sylow's theorem, there exists $x\in T$ with $Q^{\alpha t}=Q$. Thus $P=\Omega_1(Q)=\Omega_1(Q^{\alpha x})=(\Omega_1(Q))^{\alpha x}=P^{\alpha x}$ and $\alpha x$ normalizes $P$. Therefore,
\begin{align*}
\left(P\cap\bigcup_{g\in T}M_1^g\right)^{\alpha x}&=P^{\alpha x}\cap\bigcup_{g\in T}M_1^{g\alpha x}=P\cap\bigcup_{g\in T}M_1^{\alpha g^{\alpha}x}\\
&=P\cap \bigcup_{g\in T}M_2^{g^\alpha x}=P\cap \bigcup_{g\in T}M_2^g.
\end{align*}
This has shown that the $T$–conjugates of $M_2$ also cover $2q_2-1$ elements of $P$.  
Altogether, the $T$–conjugates of $M_1$ and $M_2$ (equivalently, the $\mathrm{Aut}(T)$–conjugates of $M$) cover
\[
2(2q_2-2)+1 = 4q_2 - 3
\]
elements of $P$.
If $4q_2 - 3 > |P| = q_2^2$, then there exists a cyclic subgroup $C\le P$ of order $q_2$ satisfying~\eqref{eqlem:0} and~\eqref{eqlem:1}.  
Moreover,~\eqref{eqlem:2} follows from Lemma~\ref{lem:cats}, since $T$ has Lie rank $3$.
If instead $4q_2 - 3 \ge q_2^2$, then $q_2 < 4$, so $q_2=3$.  
As $q_2=P[\Phi_{2e}(2)]$, this gives $2^e+1=3^\ell$ for some $\ell\in\mathbb{N}$.  
By Zsigmondy's theorem, this forces $e\in\{1,3\}$, and hence $q\in\{2,8\}$, both of which are excluded by our assumption that $|T|>|\mathbb{M}|$.

 Assume that either~\eqref{case4} or~\eqref{case5} occurs. Then by consulting the list of the maximal subgroups of $T$ (see for instance~\cite[Tables~8.16 and~8.43]{bray}) of order divisible by $q_4$ or $q_6$, we have $\gcd(|M|,q-1)=1$ when $T={}^2B_2(q)$ and $\gcd(|M|,(q-1)/2)=1$ when $T={}^2G_2(q)$. Let $q_1=\Phi_{e}(p)$ and let $C$ be a cyclic subgroup of $T$ of order $q_1$. Then $C$ satisfies~\eqref{eqlem:0} and~\eqref{eqlem:1} and moreover, it satisfies~\eqref{eqlem:2} by Lemma~\ref{lem:cats} provided we take $\theta(x)\ge \lambda(x,2)$, because $T$ has Lie rank $2$.

Assume~\eqref{case6} occurs. Then $q_3\le 13$ and hence, from Lemma~\ref{siegeltheorem}, we have $q\le \max\{\exp(\exp(13/c_{\Phi_3})),c'_{\Phi_3}\}$. 

Assume~\eqref{case7} occurs: we consider each line of Table~\ref{table:our} in turn.  Observe that here we need to bound both $q$ and $m$ from above with a function of $|C|$.
Suppose $(T,M)$ are as in Lines~1,~2, or~6 with $q$ even and $m>4$.  
From~\cite[Tables~3.5C,~3.5D,~3.5E, column~V]{kl}, the $\mathrm{Aut}(T)$-conjugacy class of $M$ coincides with the $T$-conjugacy class, which contradicts one of the assumptions we made earlier. Suppose $(T,M)$ are as in Lines~3,~4, or~5.  
From~\cite[Theorem~1.1]{Stewart}, there exists an absolute constant $c$ such that, whenever $2m>a$, we have 
\[
q_{2m} > 2m \exp\!\left(\frac{2m}{104 \log\log(2m)}\right).
\]
In particular, if $m$ is larger than some absolute constant $a'>a$, then $q_{2m} > 2m+3$.  
Thus, if we let $C$ be a cyclic subgroup of $T$ of order $q_{2m}$, then $C$ satisfies~\eqref{eqlem:0} and~\eqref{eqlem:1}; moreover, by Lemma~\ref{lem:cats}, it also satisfies~\eqref{eqlem:2} because the Lie rank of $T$ is bounded above by a function of $q_{2m}$.  

If $m \le a'$, then by Lemma~\ref{siegeltheorem} there exists a constant $a''$ such that, for all $q \ge a''$, we have $q_{2m} > 2a' + 3 \ge 2m + 3$, and the same argument applies.  
Finally, if $m \le a'$ and $q \le a''$, then $|T|$ is bounded above by an absolute constant, since the order of $T$ depends only on $q$ and $m$.

Suppose $(T,M)$ is as in Line~6 with $q$ odd, or with $q$ even and $m=4$.  
Let $C=\langle x\rangle$ be a cyclic subgroup of $T$ of order $q_m$, where the Jordan form of $x$ consists of two blocks of size $m\times m$.  
The existence of such an element $x$ follows from the standard embedding $
\mathrm{SO}_m^{-}(q)\times \mathrm{SO}_m^{-}(q) \le \mathrm{SO}_{2m}^{+}(q)$
together with the existence of Singer cycles in $\mathrm{SO}_m^{-}(q)$.  
Assume $m>4$.  
Then every $\mathrm{Aut}(T)$–conjugate of $M$ fixes a $1$–dimensional subspace of the natural $2m$–dimensional module for $T$.  
Consequently, no $\mathrm{Aut}(T)$–conjugate of $M$ meets $C$ in any non-trivial element.  
Thus $C$ satisfies conditions~\eqref{eqlem:0} and~\eqref{eqlem:1}.  
Moreover, by Lemma~\ref{lem:cats}, $C$ also satisfies~\eqref{eqlem:2}, since the Lie rank of $T$ is bounded above by a function of $q_m$. Assume $m=4$ that is, $T=\mathrm{P}\Omega_8^+(q)$ and $M\cong \Omega_7(q)$ (observe that when $q$ is even, we are identifying $\mathrm{Sp}_6(q)$ with $\Omega_7(q)$). The argument here is similar to the last part of case~\eqref{case3}.
According to~\cite[Table~8.20]{bray}, the $\mathrm{Aut}(T)$–conjugacy class of $M$ splits into six $T$–conjugacy classes when $q$ is odd and into three when $q$ is even, with representatives in the Aschbacher class $\mathcal{C}_1$ when $M$ is the stabilizer of a non-degenerate $1$-dimensional vector and in the Aschbacher class $\mathcal{S}$ when $M$ is embedded in $T$ via the spinor representations. Assume $q$ odd and let $M_1$ and $M_2$ be the $T$-representatives in $\mathcal{C}_1$ and let $M_3,\ldots,M_6$ be the representatives in $\mathcal{S}$.
Let $V=\mathbb{F}_q^8$ and let $P$ be an elementary abelian $q_4$-subgroup of $T$ of order $q_4^2$. The existence of $P$ follows from the embedding of $\mathrm{SO}_4^-(q)\times\mathrm{SO}_4^-(q)$ in $\mathrm{SO}_8^+(q)$, as above.
Every element of $M_1=\Omega_7(q)$ or $M_2$ of order $q_2$ has eigenvalue $1$, and hence its Jordan form consists of four $1\times 1$ blocks and a single $4\times 4$ block.  
From this it follows that the $T$–conjugates of $M_1$ cover exactly $2(q_4-1)+1$ elements of $P$, that is, $|\bigcup_{g\in T} \left(P\cap M_1^g\right)| = 2q_4 - 1$.
Altogether, the $\mathrm{Aut}(T)$–conjugates of $M$ cover at most 
$6(2q_2-2)+1 = 12q_4 - 11$
elements of $P$. If $12q_2 - 1 > |P| = q_4^2$, then there exists a cyclic subgroup $C\le P$ of order $q_4$ satisfying~\eqref{eqlem:0} and~\eqref{eqlem:1}.  
Moreover,~\eqref{eqlem:2} follows from Lemma~\ref{lem:cats}, since $T$ has Lie rank $4$.
If instead $12q_2 - 11 \ge q_4^2$, then $q_2 < 12$, so $q_4\in \{2,3,5,7\}$. As $4$ divides $q_4-1$, we get $q_4=5$. The argument when $q$ is even is entirely analogous.
\end{proof}

\section*{Acknowledgments}
 The second author is funded by the European Union via the Next
Generation EU (Mission 4 Component 1 CUP B53D23009410006, PRIN 2022, 2022PSTWLB, Group
Theory and Applications).
\thebibliography{12}
\bibitem{AS}J.~Anzanello, P.~Spiga, Finite simple groups have many classes of prime order elements, \textit{arXiv preprint }, 2025, \href{https://arxiv.org/abs/2511.16583}{arXiv: 2511.16583}
\bibitem{BaiCa}R. A. Bailey, P. J. Cameron, M. Giudici, F. G. Royle, Groups generated by derangements, \textit{J. Algebra},
(572), 2021.
\bibitem{Bauer}M.~Bauer, Zur Theorie der algebraischen Zahlkörper, \textit{Math.Ann.} \textbf{77} (1916),353-356.
\bibitem{bray}J.~N.~Bray, D.~F.~Holt and C.~M.~Roney-Dougal, \textit{The Maximal Subgroups of the Low-Dimensional Finite Classical Groups}, London Math. Soc. Lecture Note Ser. 407,
Cambridge University, Cambridge, 2013.

\bibitem{BurFus} T.~C.~Burness, M.~Fusari, On derangements in simple permutation groups, \textit{Forum Math. Sigma},
 \textbf{13}, 2025.
\bibitem{BurGiu}T. C. Burness, M. Giudici, \textit{Classical Groups, Derangements, and Primes}, Cambridge University Press, 2016.
\bibitem{6}P.~J.~Cameron, C.~Y.~Ku, Intersecting families of permutations, \textit{European J. Combin.} \textbf{24}
(2003), 881--890.
\bibitem{CLS}M.~Cazzola, L.~Gogniat, P.~Spiga, Kronecker classes and cliques in derangement graphs, \textit{arXiv preprint arXiv:2502.01287}, 2025, \href{https://arxiv.org/abs/2502.01287}{ 	arXiv:2502.01287}

\bibitem{atlas} J.~H.~Conway, R.~T. Curtis, S.~P.~Norton, R.~A.~Parker and R.~A.~Wilson, An $\mathbb{ATLAS}$ of Finite Groups \textit{Clarendon Press, Oxford}, 1985; reprinted with corrections 2003.
\bibitem{dixon}J.~D.~Dixon and B.~Mortimer, \textit{Permutation Groups}, Grad. Texts in Math. 163,
Springer, New York, 1996.

\bibitem{erdos1961intersection}P.~Erd\H{o}s, C.~Ko, R.~Rado, Intersection theorems for systems of finite sets, \textit{The Quarterly Journal of Mathematics} \textbf{12} (1961), 313--320.

\bibitem{FPS}M.~Fusari, A.~Previtali, P.~Spiga, Cliques in derangement graphs for innately transitive groups, 	\textit{J. Group Theory} \textbf{27} (2024), 929--965.
\href{https://www.degruyterbrill.com/document/doi/10.1515/jgth-2023-0284/html}{DOI: 10.1515/ jgth-2023-0284}

\bibitem{FHS}M.~Fusari, S.~Harper, P.~Spiga, Kronecker classes, normal coverings and chief factors of groups, \textit{Bull. Aust. Math. Soc.}, \href{https://doi.org/10.1017/S0004972725000176 }{DOI:10.1017/S0004972725000176}

\bibitem{Garzoni}D.~Garzoni, Derangements in non-{F}robenius groups, \textit{Proc. Lond. Math. Soc.} (3),2025.
\bibitem{Gassmann}F.~Gassmann, Bemerkungen zur vorstehenden Arbeit von Hurwitz,\textit{Math.Z.} \textbf{25} (1926), 665-675.
\bibitem{GodsilMeagher}C.~Godsil, K.~Meagher, \textit{Erd\H{o}s-Ko-Rado Theorems: Algebraic Approaches}, Cambridge studies in advanced mathematics \textbf{149}, Cambridge University Press,  2016.
\bibitem{HW}G.~H.~Hardy and E.~M.~Wright, \textit{An Introduction to the Theory of Numbers}, 5th ed.,
Clarendon Press, New York, 1979.

\bibitem{Jehne}W.~Jehne, Kronecker classes of algebraic number fields, \textit{J. Number Theory} \textbf{9} (1977), 279--320.

\bibitem{Jordan}C.~Jordan, Recherches sur les substitutions, \textit{J. Math. Pures Appl.}(Liouville) 17, 351–387.

\bibitem{notebook}E.~I.~Khukhro, V.~D.~Mazurov, Unsolved Problems in Group Theory. The Kourovka Notebook,  arXiv:1401.0300v31 [math.GR].

\bibitem{kl} P.~B.~Kleidman, M.~W.~Liebeck, \textit{The subgroup
structure of the finite classical groups}, London Math. Soc.
Lecture Notes {\bf 129}, Cambridge University Press, 1990.

\bibitem{Klingen1}N.~Klinghen, Zahlk\"{o}rper mit gleicher Primzerlegung, \textit{J. Reiner Angew. Math.} \textbf{299} (1978), 342--384.

\bibitem{Klingen2}N.~Klinghen, \textit{Arithmetical Similarities, Prime decompositions and finite group theory}, Oxford Mathematical Monographs, Clarendon Press, Oxford, 1998.

\bibitem{Kronecker}L.~Kronecker,  \textit{Über die Irreductibilität von Gleichungen, Monatsber. Deut. Akad. Wiss.} (1880), 155-163.
\bibitem{10}B.~Larose, C.~Malvenuto, Stable sets of maximal size in Kneser-type graphs, \textit{European J.
Combin. }\textbf{25} (2004), 657--673.

\bibitem{LPS}M.~W.~Liebeck, C.~E.~Praeger and J.~Saxl, Transitive subgroups of primitive permutation groups, \textit{J. Algebra} \textbf{234} (2000), 291--361.

\bibitem{praeger}C.~E.~Praeger, Kronecker classes of ﬁelds and covering subgroups of ﬁnite groups, \textit{J. Aust. Math.
Soc.} \textbf{57} (1994), 17--34.

\bibitem{li2020ekr}
C.~H.~Li, S.~J.~Song, V.~Raghu~Tej Pantangi, Erd\H{o}s-{K}o-{R}ado problems for permutation groups,
\textit{arXiv preprint arXiv:2006.10339}, 2020.

\bibitem{KRS}K.~Meagher, A.~S.~Razafimahatratra, P.~Spiga, On triangles in derangement graphs, \textit{J. Comb. Theory Ser. A} \textbf{180} (2021),  Paper No. 105390. 

\bibitem{Neukirch}J.~Neukirch, \textit{Algebraic Number Theory}, Springer, 1999

\bibitem{Praeger}C.~E.~Praeger, Covering subgroups of groups and Kronecker classes of fields, \textit{J. Algebra} \textbf{118},(1988) 455--463.

\bibitem{Praeger2}C.~E.~Praeger, Kronecker classes of fields and covering subgroups of finite groups,
\textit{J. Austral. Math. Soc. Ser. A} \textbf{57} (1994), 17--34.

\bibitem{Saxl}J.~Saxl, On a Question of W. Jehne concerning covering subgroups of groups and Kronecker
classes of fields, \textit{J. London Math. Soc. (2)} \textbf{38} (1988), 243--249.

\bibitem{Serre}J. P- Serre, On a theorem of Jordan, \textit{Bull. Amer. Math. Soc.} \textbf{40} (2003), 429-440.

\bibitem{Siegel}T.~N.~Shorey, R.~Tijdeman, On the greatest prime factors of polynomials
at integer points, \textit{Comp. Math. }\textbf{33} (1976), 187--195.

\bibitem{spiga}P.~Spiga, The Erd\H{o}s-Ko-Rado theorem for the derangement graph of the projective general linear group acting on the projective space, \textit{J. Combin. Theory Ser. A} \textbf{166} (2019), 59--90. 

\bibitem{Stewart}C.~L.~Stewart, On divisors of Lucas and Lehmer numbers, \textit{Acta Math.} \textbf{211} (2013), 291--314.
\end{document}